\theoremstyle{plain} 
\newtheorem{theorem}{Theorem}
\numberwithin{theorem}{section}
\newtheorem{lemma}[theorem]{Lemma}
\theoremstyle{definition}
\newtheorem{example}[theorem]{Example}
\newtheorem{remark}[theorem]{Remark}
\newcommand{\NNN}{\nonumber\\}
\newlength{\fromfoliotocopyright}
\providecommand{\keywords}[1]
{
  \small	
  \textbf{\textit{Keywords---}} #1
}
\providecommand{\classification}[1]
{
  \small	
  \textbf{{AMS subject classifications--}} #1
}
\title{Offline-online approximation of multiscale eigenvalue problems with random defects}
\author{Dilini Kolombage\footnotemark[1] \and Barbara Verf{\"u}rth\footnotemark[1]}
\date{}
\begin{document}
\maketitle
\renewcommand{\thefootnote}{\fnsymbol{footnote}}
\footnotetext[1]{Institut für Numerische Simulation, Universität Bonn, Friedrich-Hirzebruch-Allee 7, D-53115 Bonn, Germany}
\renewcommand{\thefootnote}{\arabic{footnote}}

\begin{abstract}
In this paper, we consider an elliptic eigenvalue problem with multiscale, randomly perturbed coefficients. For an efficient and accurate approximation of the solutions for many different realizations of the coefficient, we propose a computational multiscale method in the spirit of the Localized Orthogonal Decomposition (LOD) method  together with an offline-online strategy similar to [M{\aa}lqvist, Verf\"urth, \emph{ESAIM Math. Model. Numer. Anal.}, 56(1):237–260, 2022]. The offline phase computes and stores local contributions to the LOD stiffness matrix for selected defect configurations. Given any perturbed coefficient, the online phase combines the pre-computed quantities in an efficient manner. We further propose a modification in the online phase, for which numerical results indicate enhanced performances for moderate and high defect probabilities. We show rigorous a priori error estimates for eigenfunctions as well as eigenvalues.
\end{abstract}

\keywords{eigenvalue problems, multiscale method, perturbed coefficients, offline-online strategy}\\

\classification{65N25, 65N30, 65N12, 65N15, 35J15}

\section{Introduction}
\label{sec:Intro} Metamaterials are modern, multiscale materials and have become increasingly popular for their unique mechanical, acoustic, or electromagnetic properties. Structurally, these materials are often constructed with a uniform distribution of very small heterogeneities, i.e., they can be represented as a periodic arrangement of unit cells. However, manufacturing mistakes that occur during the fabrication process introduce random flaws into the material structure. Therefore, the robustness study of material properties in the presence of random defects is a particularly interesting research topic in modern material sciences. From a mathematical point of view, this is modeled by an elliptic partial differential equation with a randomly perturbed coefficient. Such a stochastic partial differential equation can be solved in several ways; for instance,  by stochastic Galerkin or stochastic collocation methods, see \cite{MR2084236, MR1870425, MR3202242, MR2318799} for an overview. Another large class of methods consists of sampling-based approaches, in particular Monte Carlo (MC)-type methods, including multilevel and quasi MC methods. We refer to \cite{MR2824857, MR2835612,MR3024159} for an overview.

In this work, we aim to obtain accurate and efficient approximations of the solutions to the stochastic PDE for many different samples. However, in a multiscale setting, standard finite element methods are, in general, not feasible due to the required fine discretization. Therefore, computational multiscale methods are employed, where we focus on the Localized Orthogonal Decomposition (LOD) method \cite{MR4298217,MR4191211, MR3240855, MR3926249, MR3246801} in this work.  The key idea is to construct problem-adapted basis functions incorporating information about the multiscale nature of the problem, so that a low-dimensional ansatz space yields sufficiently accurate solutions. However, solving the problem for many different samples remains quite expensive, as the problem-adapted basis functions must be re-computed for each realization of the (perturbed) coefficient.

The combined challenges of randomness and multiscale features have been tackled in several studies, often in the MC context and based on different multiscale methods. In \cite{MR3605827}, the authors construct so-called gamblets -- hierarchical basis functions that enable sparse and orthogonal multiresolution decomposition. The work in \cite{MR3118248} considers multilevel MC methods for problems with several, separated scales. In \cite{MR3649314}, a finite element-based hierarchical MC method is presented for problems with periodic multiscale coefficients generated by an ergodic dynamical system acting on a probability space.
Multiscale model reduction is combined with data-driven approaches in \cite{MR3301304} to obtain a reduced dimensional system in both physical and stochastic aspects. In \cite{MR3264336}, the authors propose a Multiscale Finite Element Method with a deterministic multiscale basis by taking into account the perturbative nature of the random coefficients, which have a similar structure as our model below. Very recently, \cite{elasmi2024neuralnumericalhomogenizationbased} suggests a combination of the LOD with neural networks for an efficient evaluation of the coefficient-to-basis map for general parametrized multiscale problems, that might be applicable in the random case as well. All the aforementioned works primarily consider the source problem. With regard to the eigenvalue problem, \cite{MR4027848} combines operator-adapted wavelets (gamblets) with hierarchical subspace correction to efficiently block-diagonalize the operator and refine solutions across scales. The algorithm computes eigenpairs through coarse-to-fine corrections in a multigrid fashion. A preconditioned inverse iteration with an optimal multigrid solver is used in \cite{MR4028792} to efficiently compute approximations of the localized low-energy eigenstates to a linear random Schr\"odinger operator.
Note that we cannot expect such localized eigenstates in our setting.

Here, we study the elliptic eigenvalue problem with perturbed coefficients
 \begin{equation}
 \label{elliptic:strong}
      -\nabla \cdot (A(x,\omega)\nabla u(x,\omega))=\lambda(\omega)u(x,\omega)
 \end{equation}
 
where $A(x,\omega)$ is a realization of the form
\begin{equation}
\label{coefA}
    A(x, \omega) = A_\varepsilon(x) + b_p(x;\omega)B_\varepsilon(x).
\end{equation}
 
Here, $A_\varepsilon(x) $ and $B_\varepsilon(x)$ are deterministic, multiscale coefficients, and $b_p(x;\omega)$ follows a Bernoulli law with probability $p$. Detailed assumptions are postponed to Section \ref{sec:pre_1}. The random defects model for $A$ is the same as in \cite{MR4378546} and originally considered in \cite{MR2818410, MR2864078}. Figure \ref{fig:image1} represents two example coefficients of $A$  which fall into the above model. In the random checkerboard (Figure \ref{fig:1.1}), the coefficient $A$ is generated by randomly including $\varepsilon$-length square-spots, \textit{i.e. defects}, with a probability $p$ on to the constant background coefficient $A_\varepsilon$. In random erasure (Figure \ref{fig:1.2}), $A_\varepsilon$ is made of square-spots that are periodically arranged with periodicity $\varepsilon$ on a constant background value. Then the coefficient $A$ is generated by randomly removing the $\varepsilon$-length square-spots with a probability $p$. While in the random checkerboard a defect is represented by the random inclusion of a square-spot, in the random erasure a defect is indicated by the random removal of a square-spot.
    
\begin{figure}[H]
\begin{subfigure}{0.5\textwidth}
\includegraphics[width=0.8\linewidth, height=5cm]{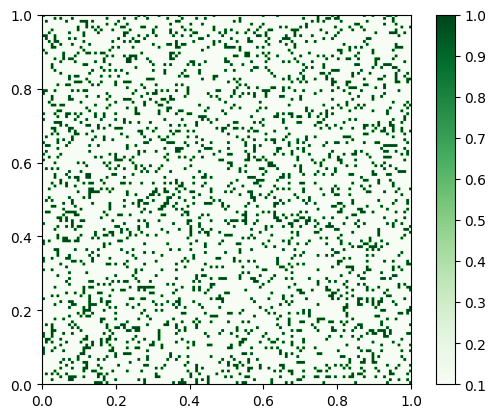} 
\caption{Random checkerboard}
\label{fig:1.1}
\end{subfigure}
\begin{subfigure}{0.5\textwidth}
\includegraphics[width=0.8\linewidth, height=5cm]{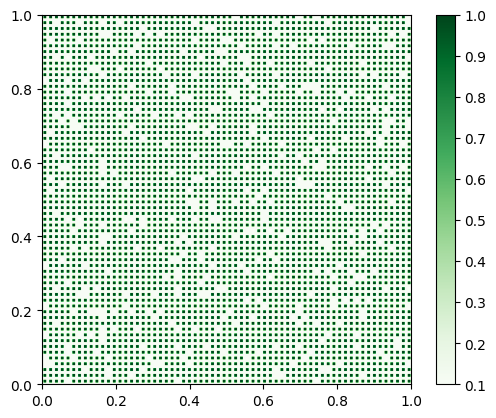}
\caption{Random erasure}
\label{fig:1.2}
\end{subfigure}
\caption{Weakly random coefficients with $\varepsilon = 2^{-7}$ and $p=0.1$.}
\label{fig:image1}
\end{figure}

Our work combines the LOD for multiscale eigenvalue problems, but without random perturbations, from \cite{MR3343928} and the offline-online strategy for randomly perturbed source-type diffusion problems from \cite{MR4378546}. The latter allows for a fast and efficient approximation of the LOD stiffness matrix for many realizations. To transfer the idea to eigenvalue problems, we employ the coefficient-independent finite element mass matrix for the right-hand side instead of the LOD mass matrix considered in \cite{MR3343928}. Moreover, we propose a modification to the offline-online strategy improving the approximation in particular for moderate to high defect probabilities, which may be of independent interest.
Besides the formulation of this method, our main contribution is a detailed a priori error analysis, based on perturbation arguments and Babu\v{s}ka-Osborn theory \cite{MR1115240, MR962210} for eigenvalue problems. As intermediate results that may be of own interest, we derive eigenfunction and eigenvalue error estimates for the Petrov-Galerkin variant of the LOD and with finite element mass matrix. We emphasize that, in contrast to \cite{MR3343928}, we always consider the \emph{localized} version of the LOD method, where the corrector problems for the multiscale basis functions are computed on local patches as done in practice. 
  
The paper is organized as follows. Section \ref{sec:2} introduces the model problem and recaps the offline-online strategy for the LOD method, applied to the eigenvalue problem. Further, we propose a new modification of the offline phase. In Section \ref{sec:3}, we present our preliminary work on compact operators for eigenvalue problems that has been used as the foundation for the error analysis in Section \ref{sec:4}. In Section \ref{sec:5}, several numerical experiments illustrate the theoretical results and assess the performance of the proposed method. The conclusion is given in Section \ref{sec:6}.

\section{Setting and numerical methods}
\label{sec:2}

The main goal of this section is to introduce the offline-online strategy inspired by the LOD method and apply it to our eigenvalue problem. For this, we first introduce our model problem with all necessary assumptions. To derive our method, we recap the essential ingredients of the LOD. Then we discuss how to discretize the mass term on the right-hand side in the eigenvalue problem, which leads to the so-called modified LOD (MLOD). Finally, we recall the offline-online strategy  -- here called OLOD -- based on \cite{MR4378546}, which allows for an efficient assembly of the left-hand side of the eigenvalue problem. We also propose an alternate variant of the OLOD at the end of the section.

\subsection{Model problem}
\label{sec:pre_1}
In this section, we describe the problem for a single sample $\omega$. For simplicity, we thus omit the random variable $\omega$ in the following. Recall, however, that the final goal is to find an effective way of solving this problem for many samples. We assume the domain $\Omega \subset \mathbb{R}^d$ to be a unit hypercube, i.e., $\Omega = [0,1]^d$ with $d\in\{1,2,3\}$ where we impose periodic boundary conditions. When we later discretize our problem, we always use Cartesian meshes. 

\noindent The weak form of the elliptic eigenvalue problem in \eqref{elliptic:strong} is given by: Find $(\lambda, u) \in \mathbb{R} \times V$ such that
    \begin{equation}
    \label{eqnn: 2.7}
        a(u,v) = \lambda m(u,v), \; \forall v \in V,
    \end{equation}
\noindent where \begin{equation*}
    a(u, v) := \int\limits_{\Omega}A(x)\nabla u(x)\cdot \nabla v(x) dx, \hspace{2cm} m(u, v) := \int\limits_{\Omega} u(x)v(x) dx.
\end{equation*} 
Note that although equation \eqref{eqnn: 2.7} is stated in a general form for simplicity, it encloses all eigenpairs $(\lambda^{(l+j)},u^{(l+j)})$ for $j=0, \cdots ,r-1$, accounting for multiplicities of the $l$-th eigenvalue. This interpretation applies similarly to all subsequent discretized formulations.
The functional space $V$ is the conforming finite element space given by
\begin{equation*}
    V := V_h \subseteq \mathcal H^1_\#(\Omega) := \left\{ v \in H^1_{\text{loc}}\left(\mathbb{R}^d\right) \ \middle|\ v \text{ is } \Omega\text{-periodic and } v|_\Omega \in H^1(\Omega) \right\},
\end{equation*}
where $ H^1(\Omega) := \{ v \in L^2(\Omega) : \nabla v \in (L^2(\Omega))^d \} $ is the usual Sobolev space, and $H^1_{\text{loc}}(\mathbb{R}^d)$ consists of functions in $ H^1(\mathcal{U}) $ for every bounded open set $ \mathcal{U} \subset \mathbb{R}^d $. The \(\Omega\)-periodicity means that the function $ v$ can be extended to a function on $\mathbb{R}^d$ that is periodic with respect to the domain $\Omega$.
 We assume the mesh size $h$ to be small enough that we treat the solution $u\in V_h$ as the reference solution, i.e., the error towards the exact solution in $\mathcal{H}^1_\#(\Omega)$ is negligible.
\noindent The energy norm and the $L^2$-norm are as usually defined by $a(\cdot, \cdot) =: {\norm{\cdot}^2_{\mathcal{A}}}={\norm{A^{1/2}\nabla \cdot}^2_{L^2(\Omega)}}$  and $m(\cdot, \cdot) =: \norm{\cdot}^2_{L^2(\Omega)}$ respectively. 
The realization $A(x) \in L^\infty(\Omega, \mathbb{R})$ is a sample of a randomly perturbed coefficient which is uniformly bounded and elliptic
\begin{equation*}
        0<\alpha \leq \text{ess} \inf\limits_{x\in \Omega}A(x), \;\;\;\; \infty > \beta \geq \text{ess}\sup\limits_{x\in \Omega}A(x).
\end{equation*}
For simplicity, we assume in the following that $A(x)$ is scalar-valued; however, all results can be immediately transferred to the matrix-valued case. 
The underlying deterministic coefficients $A_\varepsilon$ and $B_\varepsilon$ of $A(x)$ given in \eqref{coefA} are taken to be spectrally bounded as 
\begin{equation*}
    0<\alpha \leq \text{ess} \inf\limits_{x\in \Omega}A_{\text{per}}(x), \;\;\;\; \infty > \beta \geq \text{ess}\sup\limits_{x\in \Omega}A_{\text{per}}(x)
\end{equation*}
and 
  \begin{equation*}
    0<\alpha \leq \text{ess} \inf\limits_{x\in \Omega}(A_{\text{per}}(x)+B_{\text{per}}(x)), \;\;\;\; \infty > \beta \geq \text{ess}\sup\limits_{x\in \Omega}(A_{\text{per}}(x)+B_{\text{per}}(x))
\end{equation*}  
for  $A_{\text{per}}$ and $B_{\text{per}}$ are  $1-$periodic with $A_\varepsilon=A_{\text{per}}(x/\varepsilon)$ and $B_\varepsilon = B_{\text{per}}(x/\varepsilon)$. The multiscale parameter $\varepsilon$ is such that $\varepsilon=1/n, \; n \in \mathbb{N}, n \gg 1$. The random defects are modeled via
    \begin{equation*}
       b_{p,\varepsilon}(x,\omega) = \sum\limits_{i \in I}\chi_{\varepsilon(i+Q)}(x)\hat{b}^i_p(\omega).
    \end{equation*}
    
\noindent Here, the function $\chi$ is the characteristic function of each defect with $Q \subseteq [0,1]^d$ and $I:= \{\kappa \in \mathbb{Z}^d\ | \varepsilon(\kappa+Q)\subset \Omega\}$. The independent random variables $\hat{b}^i_p$ are Bernoulli distributed with probability $p$. Precisely, $\hat{b}^i_p=0$ with probability $1-p$ and $\hat{b}^i_p=1$ with probability $p$. 

The analysis of the elliptic eigenvalue problem guarantees the existence of its eigensolutions \cite{MR2597943}. The well-posedness of the problem \eqref{eqnn: 2.7} is guaranteed by the coercivity and the continuity of the bilinear form $a(\cdot, \cdot)$ \cite{MR443377}.

For conciseness, we use the notation $a \lesssim b$ to denote the relationship $a \leq Cb$ where the generic constant $C$  is independent of the discretization parameters $H, h, \mathfrak{k}$ as well as $\varepsilon$ throughout the rest of the paper.

\subsection{The LOD method}
The LOD method is a numerical approach designed for solving elliptic differential equations with highly heterogeneous or multiscale coefficients. It enables us to accurately approximate solutions without requiring a fine mesh over the whole domain. Instead, it works on a coarse mesh while incorporating localized fine-scale corrections. In this section, let us briefly outline the LOD method.

We start by introducing a coarse mesh $\mathcal{T}_H$ over the domain $\Omega$, with a local mesh size $H  := \max\limits_{T\in \mathcal{T}_H}\text{diam}(T)$. The fine-scale behavior is assumed to occur on a smaller scale $\varepsilon \ll H$. We define the computational finite element space $V_H$, consisting of continuous, piecewise bilinear functions on quadrilateral (or cuboidal) elements by
\begin{equation*}
    V_H := \mathcal{H}^1_\#(\Omega) \cap \mathcal{Q}_1(\mathcal{T}_H)
\end{equation*}
where $\mathcal{Q}_1(\mathcal{T}_H)$ is the space of piecewise polynomials of partial degree $\leq 1$.

The fine space $V_h$ on which the reference solution is defined, is constructed similarly as $V_H$ but based on a refined mesh $\mathcal T_h$. We assume that the  coarse mesh size $H$ is an integer multiple of the periodicity length $\varepsilon$, which together with the periodicity of the coefficient gives additional structure to our problem.

To distinguish coarse- and fine-scale features, the LOD method utilizes a quasi-local linear operator $I_H: V \to V_H$. The kernel space $$W := \text{ker}(I_H\vert_{V})=\{v \in V : I_H(v) = 0\} \subset V$$ captures the fine-scale components unresolved by $V_H$.
\newline

\noindent In this study, we define this bounded, surjective projection operator $I_H$ as follows:
\begin{equation}
\label{def: I}
    I_H := E_H \circ \Pi_H,
\end{equation}
where $\Pi_H : V \to \mathcal{Q}_1(\mathcal{T}_H)$ denotes the local $L^2$-projection onto the piecewise linear discontinuous finite element space, and $E_H$ is the averaging operator that maps discontinuous functions in $\mathcal{Q}_1(\mathcal{T}_H)$ to the conforming space $V_H$ defined by 
\begin{equation*}
    (E_H(v))(z) = {\sum\limits_{T\in \mathcal{T}_H, z \in \overline{T}} v|_T(z)}\bigg/{\text{card}\{K \in \mathcal{T}_H, z\in \overline{K}\}},
\end{equation*}
for any $v \in \mathcal{Q}_1(\mathcal{T}_H)$ and vertex $z$ of $\mathcal{T}_H$. This interpolation operator $ I_H = E_H \circ \Pi_H$ is $L^2$-stable as discussed in \cite{MR3687898}, that is,
\begin{equation*}
    \norm{I_Hv}_{L^2(\Omega)} \leq C_{\text{stab}}\norm{v}_{L^2(\Omega)} \text{ for all } v\in V
\end{equation*}
with a constant $C_{\text{stab}}$ that depends only on the mesh regularity.

Having established the construction and stability of the interpolation operator $I_H$, we now turn to the construction of the LOD method. In the ideal setting, fine-scale corrections are computed globally by a corrector operator  $  \mathscr{C}(A): V \to W $ defined over the entire domain. This operators is defined via its $a$-orthogonal property \begin{equation*}
 a((1-\mathscr{C}(A))v, w) =0, \; \; \text{ for } v \in V \text{ and }w \in W.
\end{equation*} 
The key idea is to use the finite-dimensional space $(1-\mathscr C(A))V_H$ in a Galerkin scheme for \eqref{eqnn: 2.7}. Although computing global correctors is computationally expensive, their exponential decay from the associated coarse elements enables efficient local approximation within small patches of the domain. We introduce these patches as follows.

For an element $T \in \mathcal{T}_H$ and $ \mathfrak{k} \in \mathbb{N}_0$, the patch $U_\mathfrak{k}(T) \subset \Omega$ centered around $T$ is defined by
\begin{equation*}
    \begin{split}
        U_0(T) & = T \\
        U_{\mathfrak{k}+1}(T) &= \bigcup\{T\in \mathcal{T}_H: \; \overline{T}\cap \overline{U_\mathfrak{k}(T)} \neq 0\}.
    \end{split}
\end{equation*}
The patch $U_\mathfrak{k}(T)$ is called the $\mathfrak{k}$-th layer patch. The localized fine-scale space associated with a patch $U_\mathfrak{k}(T)$ is then defined as
\begin{equation*}
       W^\mathfrak{k} :=  W(U_\mathfrak{k}(T)) := \{ w \in W : \; \text{supp}(w)\subseteq U_\mathfrak{k}(T)\}.
\end{equation*}

The localized corrector problem defined on $W^\mathfrak{k}$ is designed to compute a fine-scale correction that removes unresolved oscillations from any coarse function. In the following, we define the element correction operator, $\mathscr{C}_{\mathfrak{k},T}(A): V \to W^\mathfrak{k}$ that solves
\begin{equation*}
\label{eqn: 6.23}
  a\left(\mathscr{C}_{\mathfrak{k},T}(A) v, w\right) =  \left(A\nabla(\mathscr{C}_{\mathfrak{k},T}(A)v),\nabla w\right)_{U_\mathfrak{k}(T)} = \left(A\nabla v, \nabla w\right)_T = \int\limits_T(A\nabla v)\cdot \nabla w dx, \; \; \forall w \in W^\mathfrak{k}.
\end{equation*}
Observe that the space $W^\mathfrak{k}$ is constructed by restricting the global fine-scale space $W$ to the localized patches where $I_H$ is our continuous quasi-interpolation operator. Since the kernel of a continuous linear operator on a Hilbert space is closed, and the restriction preserves closedness, it follows that $W^\mathfrak{k} \subset \mathcal{H}^1_\#(\Omega)$ is also closed. Consequently, the element correction operator defined through a local linear variational problem on $W^\mathfrak{k}$ is well-defined by the Lax-Milgram theorem. \newline 

\noindent Next we define the localized correction operator $\mathscr{C}_\mathfrak{k}(A): V \to  W^\mathfrak{k}$ as the sum
\begin{equation*}
    \mathscr{C}_\mathfrak{k}(A) := \sum\limits_{T\in \mathcal{T}_H}\mathscr{C}_{\mathfrak{k},T}(A).
\end{equation*}

\begin{remark}
We stress stress that the localized correction operator $\mathscr{C}_\mathfrak{k}$ does not inherit the $a$-orthogonality property of the ideal correction operator $\mathscr{C}(A)$ due to the truncation errors. In addition to being $a$-orthogonal, the ideal corrector $\mathscr{C}(A)$ is also stable in the energy norm for all $v\in V$, as it acts as an $a$-orthogonal projection onto the fine-scale space $W$. Since the orthogonal projections in Hilbert spaces do not increase the norm, $\norm{\mathscr{C}(A)v}_\mathcal{A}$ is guaranteed to be bounded by $\norm{v}_\mathcal{A}$. Although the localized corrector $\mathscr{C}_\mathfrak{k}(A)$ lacks the $a$-orthogonality, it is stable in the energy norm for any $v \in V_H$. This follows because the element correction operator is well defined and $\mathscr{C}_{\mathfrak{k},T}(A)v$ solves a coercive problem on a patch where these patches have uniformly bounded overlap.
\end{remark}

\noindent We now define the localized multiscale space by
\begin{equation}
\label{eqn: 16}
    V^\mathrm{ms}_{H,\mathfrak{k}} := \left(1 - \mathscr{C}_\mathfrak{k}(A)\right) V_H = \text{span}\{\varphi_z-\mathscr{C}_\mathfrak{k}(A)\varphi_z\}_{z \in N}
\end{equation}
where $N$ denotes the set of vertices of $\mathcal{T}_H$ and $\{\varphi_z\}_{z\in N}$ is the nodal basis of $V_H$. \newline

\noindent From this point onward, we abbreviate $\mathscr{C}(A) = \mathscr{C}, \mathscr{C}_\mathfrak{k}(A) = \mathscr{C}_\mathfrak{k}$ and $  \mathscr{C}_{\mathfrak{k},T}(A)= \mathscr{C}_{\mathfrak{k},T}$. We can now introduce the LOD problem.\\

\noindent \textbf{The LOD problem:} 
Find $({\lambda}_{H,\mathfrak{k}},{u}_{H,\mathfrak{k}})\in \mathbb{R} \times V_{H}$ such that

\begin{equation}
\label{eqn: LOD}
{a}_\mathfrak{k}({u}_{H,\mathfrak{k}}, v) = {\lambda}_{H,\mathfrak{k}}{m}_\mathfrak{k}({u}_{H,\mathfrak{k}}, v) \;\; \forall {v} \in  V_{H}
\end{equation}
where ${a}_\mathfrak{k}(\cdot, \cdot) = {a}((1-\mathscr{C}_\mathfrak{k})(\cdot), (1-\mathscr{C}_\mathfrak{k})(\cdot))$ and ${m}_\mathfrak{k}(\cdot, \cdot)={m}((1-\mathscr{C}_\mathfrak{k})(\cdot), (1-\mathscr{C}_\mathfrak{k})(\cdot))$.\newline

\noindent For a comprehensive treatment of the LOD method, we refer the reader to \cite{MR4191211}, and include the following results \cite[Theorem 4.3, Lemma 8.1]{MR4191211} without proof.

\begin{theorem}
\label{thm:5.2.4}
For any $v\in V$ and any $\mathfrak{k} \in \mathbb{N}_0$
\begin{equation*}
    \norm{(\mathscr{C}- \mathscr{C}_\mathfrak{k})v}_{\mathcal{A}} \lesssim g(\mathfrak{k})\norm{v}_{\mathcal{A}},
\end{equation*}
where $g(\mathfrak{k}) = (\mathfrak{k}^{d/2}+1)\exp(-c\frac{\alpha}{\beta}\mathfrak{k})$.
\end{theorem}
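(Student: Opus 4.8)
The plan is to reduce the global statement to a per-element estimate and then to establish the exponential decay of the ideal element correctors. Write $\mathscr{C}_T$ for the ideal (global) element corrector, so that $\mathscr{C} = \sum_{T}\mathscr{C}_T$ and $\mathscr{C}_\mathfrak{k} = \sum_T \mathscr{C}_{\mathfrak{k},T}$. The first observation is that $\mathscr{C}_{\mathfrak{k},T}v$ is exactly the Galerkin approximation of $\mathscr{C}_T v$ in the restricted space $W^\mathfrak{k}(T)\subset W_H$: both solve $a(\cdot,w)=(A\nabla v,\nabla w)_T$, the localized one for all $w\in W^\mathfrak{k}(T)$ and the ideal one for all $w\in W_H$. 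Hence C\'ea's lemma gives $\norm{(\mathscr{C}_T-\mathscr{C}_{\mathfrak{k},T})v}_{\mathcal{A}}\le \inf_{w\in W^\mathfrak{k}(T)}\norm{\mathscr{C}_T v-w}_{\mathcal{A}}$, so it remains to exhibit a good local approximant $w$ and then to sum over $T$.

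The technical core is the exponential decay of $\mathscr{C}_T v$ away from $T$, which I would prove by an iterated Caccioppoli-type argument. Fix a coarse cutoff $\eta$ with $\eta\equiv 0$ on $U_{\mathfrak{k}-1}(T)$, $\eta\equiv 1$ on $\Omega\setminus U_\mathfrak{k}(T)$, and $\norm{\nabla\eta}_{L^\infty}\lesssim H^{-1}$. Testing the corrector equation with the admissible function $w=(1-I_H)(\eta\,\mathscr{C}_T v)\in W_H$ (which lies in $\ker I_H$ and, for $\mathfrak{k}\ge 2$, vanishes on $T$, so the right-hand side drops out), and using the product rule together with the local approximation and $H^1$-stability properties of $I_H=E_H\circ\Pi_H$, one derives a recursion of the form $\norm{\mathscr{C}_T v}_{\mathcal{A},\,\Omega\setminus U_\mathfrak{k}(T)}^2 \le \tilde\gamma\,\norm{\mathscr{C}_T v}_{\mathcal{A},\,U_\mathfrak{k}(T)\setminus U_{\mathfrak{k}-1}(T)}^2$ with a ratio $\tilde\gamma=\tilde\gamma(\alpha/\beta)<1$. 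Iterating over the layers yields $\norm{\mathscr{C}_T v}_{\mathcal{A},\,\Omega\setminus U_\mathfrak{k}(T)}\lesssim \exp(-c\tfrac{\alpha}{\beta}\mathfrak{k})\norm{v}_{\mathcal{A},T}$; the ellipticity ratio $\alpha/\beta$ enters exactly through the coercivity and continuity constants used to absorb the cross terms, which explains its appearance in $g(\mathfrak{k})$.

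With the decay available, I would choose in the C\'ea bound the local approximant $w=(1-I_H)((1-\eta)\mathscr{C}_T v)\in W^\mathfrak{k}(T)$, with the transition of $\eta$ placed one coarse layer inside so that the quasi-interpolation keeps the support within $U_\mathfrak{k}(T)$. Estimating $\norm{\mathscr{C}_T v-w}_{\mathcal{A}}$ by the energy of $\mathscr{C}_T v$ on the exterior region (again via stability of $I_H$) gives the per-element estimate $\norm{(\mathscr{C}_T-\mathscr{C}_{\mathfrak{k},T})v}_{\mathcal{A}}\lesssim \exp(-c\tfrac{\alpha}{\beta}\mathfrak{k})\norm{v}_{\mathcal{A},T}$. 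Finally I would sum: since $(\mathscr{C}-\mathscr{C}_\mathfrak{k})v=\sum_T(\mathscr{C}_T-\mathscr{C}_{\mathfrak{k},T})v$ and each summand is, up to an exponentially small global tail, supported in $U_\mathfrak{k}(T)$, the finite-overlap property — every element meets at most $O(\mathfrak{k}^d)$ of the patches $U_\mathfrak{k}(T)$ — yields $\norm{(\mathscr{C}-\mathscr{C}_\mathfrak{k})v}_{\mathcal{A}}^2\lesssim \mathfrak{k}^d\sum_T \exp(-2c\tfrac{\alpha}{\beta}\mathfrak{k})\norm{v}_{\mathcal{A},T}^2=\mathfrak{k}^d\exp(-2c\tfrac{\alpha}{\beta}\mathfrak{k})\norm{v}_{\mathcal{A}}^2$. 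Taking the square root produces the factor $\mathfrak{k}^{d/2}$ (the $+1$ absorbing the small-$\mathfrak{k}$ regime), which is precisely $g(\mathfrak{k})$.

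I expect the decay recursion to be the main obstacle: making the Caccioppoli argument rigorous requires carefully controlling the commutator terms generated by the cutoff $\eta$ through the approximation and $L^2$/$H^1$-stability estimates of $I_H$, and checking that $\tilde\gamma$ stays strictly below one uniformly in $H$ and $\varepsilon$. A secondary bookkeeping point arises in the summation, where the tail of $\mathscr{C}_T v$ outside its patch is globally supported; one handles this by splitting each summand into its truncation, to which the overlap argument applies, plus the exponentially small remainder, so that the $\mathfrak{k}^{d/2}$ factor is not inflated.
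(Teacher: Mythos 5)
The paper states this theorem \emph{without proof}, referring the reader to the cited LOD literature, and your sketch is a faithful reproduction of the standard argument from that reference: the Galerkin/C\'ea reduction to the element correctors, exponential decay via iterated cutoff (Caccioppoli-type) estimates exploiting the stability and approximation properties of $I_H$, and the summation over elements with the finite-overlap count $O(\mathfrak{k}^d)$ producing the $\mathfrak{k}^{d/2}$ factor. The argument is correct in its essentials (including the bookkeeping remark about the globally supported tails), so there is nothing to compare against within the paper itself and no gap to flag.
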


\begin{lemma}
\label{Lem: 5.1.4}
Let $I_H=E_H \circ \Pi_H$ be as defined in \eqref{def: I}. Then, for $v \in V$ and $w \in W = \mathrm{kern}(I_H)$,
\begin{equation*}
    \frac{\abs{m(v,w)_{L^2(\Omega)}}}{{\norm{v}_{{\mathcal{A}}}}{\norm{w}_{{\mathcal{A}}}}} \lesssim H^2.
\end{equation*}
\end{lemma}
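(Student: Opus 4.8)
The plan is to prove the bound by splitting $v$ with the quasi-interpolation operator $I_H$ and exploiting that each of the two factors contributes one power of $H$. I first recall the standard estimates for $I_H=E_H\circ\Pi_H$ that I will take for granted (they are part of the LOD framework in \cite{MR3246801}): the local approximation property $\norm{v-I_Hv}_{L^2(T)}\lesssim H\norm{\nabla v}_{L^2(U_1(T))}$, the $H^1$-stability $\norm{\nabla I_Hv}_{L^2(\Omega)}\lesssim\norm{\nabla v}_{L^2(\Omega)}$, and the fact that $\Pi_H$ is a local $L^2$-projection, hence an $L^2$-contraction satisfying $\int_T w\,q=\int_T(\Pi_Hw)\,q$ for every $q\in\mathcal{Q}_1(T)$. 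Since $w\in W_H$ means $I_Hw=0$, the approximation property immediately yields the kernel estimate $\norm{w}_{L^2(T)}\lesssim H\norm{\nabla w}_{L^2(U_1(T))}$, which is the source of the first factor of $H$.

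I would then write $m(v,w)=m((1-I_H)v,w)+m(I_Hv,w)$ and treat the two terms separately. For the first term, an elementwise Cauchy--Schwarz together with the approximation estimate for $v$ and the kernel estimate for $w$ gives $\abs{m((1-I_H)v,w)}\lesssim\sum_T H\norm{\nabla v}_{L^2(U_1(T))}\,H\norm{\nabla w}_{L^2(U_1(T))}\lesssim H^2\norm{\nabla v}_{L^2(\Omega)}\norm{\nabla w}_{L^2(\Omega)}$, where the last step uses Cauchy--Schwarz in the sum over $T$ and the finite overlap of the patches $U_1(T)$. Converting gradients to energy norms via ellipticity ($\norm{\nabla\cdot}_{L^2(\Omega)}\le\alpha^{-1/2}\norm{\cdot}_{\mathcal{A}}$), this is already of the desired order.

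The second term $m(I_Hv,w)$ is the crux, because $V_H$ and $W_H$ are not $L^2$-orthogonal, so the term does not simply vanish. Writing $g:=I_Hv\in V_H$ and using the elementwise projection property with the test function $g|_T\in\mathcal{Q}_1(T)$, I replace $w$ by $\Pi_Hw$, so that $m(I_Hv,w)=\int_\Omega g\,\Pi_Hw$. I then subtract the elementwise averages $\bar g$ of $g$. The fluctuation part contributes $\abs{\int_\Omega(g-\bar g)\Pi_Hw}\lesssim H\norm{\nabla g}_{L^2(\Omega)}\norm{\Pi_Hw}_{L^2(\Omega)}$, and since $\Pi_H$ is an $L^2$-contraction, $\norm{\Pi_Hw}_{L^2(\Omega)}\le\norm{w}_{L^2(\Omega)}\lesssim H\norm{\nabla w}_{L^2(\Omega)}$, giving once more an $H^2$ bound after the stability estimate $\norm{\nabla g}\lesssim\norm{\nabla v}$.

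The genuinely delicate contribution is $\int_\Omega\bar g\,\Pi_Hw$ with $\bar g$ piecewise constant, and this is where the defining property $E_H\Pi_Hw=0$ of the kernel must be used. On the uniform mesh the nodal quadrature identity $\int_T\Pi_Hw=\frac{\abs{T}}{2^d}\sum_{z\in\overline T}(\Pi_Hw)|_T(z)$ together with $E_H\Pi_Hw=0$ shows that $\int_\Omega c\,\Pi_Hw=0$ for any global constant $c$; rewriting $\bar g$ about each vertex and estimating its variation across the vertex patch by $H\norm{\nabla g}$ (a Poincaré argument) upgrades this cancellation to $\abs{\int_\Omega\bar g\,\Pi_Hw}\lesssim H\norm{\nabla g}_{L^2(\Omega)}\norm{\Pi_Hw}_{L^2(\Omega)}\lesssim H^2\norm{\nabla v}_{L^2(\Omega)}\norm{\nabla w}_{L^2(\Omega)}$. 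Collecting all contributions and passing to energy norms yields the claim. I expect this last cancellation step -- extracting the second power of $H$ from the coefficient-blind pairing of a coarse function against a kernel function -- to be the main obstacle, since it is exactly the place where neither $L^2$-orthogonality nor a Poincaré inequality is available for free, and one has to lean on the vertex-averaging structure of $I_H$ and the uniformity of the mesh.
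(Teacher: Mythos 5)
The paper never proves this lemma: it is stated ``without proof'' and delegated to the LOD literature \cite{MR3246801}, so there is no in-paper argument to compare yours against; I therefore assess your proof on its own merits, and I find it correct. You rightly identify that the easy contributions --- $m((1-I_H)v,w)$ and the elementwise fluctuation of $g=I_Hv$ --- follow from the kernel estimate $\norm{w}_{L^2(T)}\lesssim H\norm{\nabla w}_{L^2(U_1(T))}$ and the elementwise Poincar\'e inequality, and that the genuine difficulty is the pairing $m(I_Hv,w)$, where $V_H$ and $W_H$ fail to be $L^2$-orthogonal for $I_H=E_H\circ\Pi_H$ and a naive Cauchy--Schwarz loses one power of $H$. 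Your resolution of that term is the right mechanism and it does close: replacing $w$ by $\Pi_Hw$ via the elementwise projection property, reorganizing $\sum_T\bar g_T\int_T\Pi_Hw$ over vertices with the $\mathcal{Q}_1$ quadrature identity $\int_Tq=2^{-d}\abs{T}\sum_{z\in\overline T}q|_T(z)$, and using that $E_H\Pi_Hw=0$ forces the vertex sums $\sum_{T\ni z}(\Pi_Hw)|_T(z)$ to vanish, so that a local constant may be subtracted from $\bar g$ at each vertex and its variation over the vertex patch costs the second factor of $H$ times $\norm{\nabla I_Hv}$ (one can confirm the cancellation explicitly in $d=1$ by summation by parts). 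Two routine points are left implicit and should be written out in a full proof: the norm equivalence $\abs{(\Pi_Hw)|_T(z)}\lesssim\abs{T}^{-1/2}\norm{\Pi_Hw}_{L^2(T)}$ needed to convert vertex values back into $L^2$ quantities before the final Cauchy--Schwarz over vertices with finite overlap, and the fact that turning $E_H\Pi_Hw=0$ into $\int_\Omega c\,\Pi_Hw=0$ uses that all elements have equal measure --- guaranteed here by the uniform square meshes with periodic boundary conditions, but requiring an extra argument on a general shape-regular mesh.
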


\noindent Without localization, i.e., $ \mathfrak{k}=\infty$, the ideal LOD method for the problem \eqref{eqn: LOD}, has been proposed and rigorously analyzed in \cite{MR3343928, MR4191211}. Combining these estimates and proof techniques with Theorem 2.2, one arrives at the following error estimates for the $l$-th eigenvalue solution of the LOD with localization

\begin{equation}
    \begin{split}
    \label{est:LOD}
        \frac{\lambda^{(l)}_{H,\mathfrak{k}}-\lambda^{(l)}}{\lambda^l} & \lesssim \left( g(\mathfrak{k}) + H^2\right) H^2  \\
     \norm{u^{(l)}-{u}_{H,\mathfrak{k}}^{(l),\mathrm{ms}}}_{L^2(\Omega)}
     & \lesssim   \left(g(\mathfrak{k}) + H^2\right)H \\ 
      \norm{u^{(l)} - {u}_{H,\mathfrak{k}}^{(l),\mathrm{ms}}}_\mathcal{A} & \lesssim \left( \sqrt{g(\mathfrak{k})}+H \right)H
    \end{split}
\end{equation}

\noindent where $u^{(l),\text{ms}}_{H,\mathfrak{k}}=(1-\mathscr{C}_\mathfrak{k})u^{(l)}_{H,\mathfrak{k}}$.
By choosing $\mathfrak{k}$ large enough such that $g(\mathfrak{k}) \approx H^2$, we obtain the same order of convergence in \eqref{est:LOD} as established in \cite[Theorems 8.4, 8.5]{MR4191211}.

\subsection{Modified LOD (MLOD) method}
In the MLOD method, we consider a Petrov-Galerkin version. While the trial space is taken to be the multiscale space $V^\mathrm{ms}_{H,\mathfrak{k}}$, the test space is considered to be the standard FE space $V_H$. This is advantageous in terms of computational complexity, see \cite{MR3422449} for details, and, in fact, the Petrov-Galerkin ansatz is essential for the offline-online strategy in the next section. Furthermore, we use the standard FE mass matrix instead of the LOD mass matrix. This choice is justified since the right-hand side does not involve the multiscale coefficient and thus does not need to be computed on a multiscale space. Choosing the FEM mass matrix is advantageous for our offline-online strategy as it allows to compute the right-hand side independent from the random sample. We will carefully analyze the error introduced by these modifications in Section \ref{sec: 4.1} and \ref{sec: 4.2}.\\

\noindent\textbf{The MLOD problem:} 
 Find $(\Tilde{\lambda}_{{H,\mathfrak{k}}},\Tilde{u}_{{H,\mathfrak{k}}})\in  \mathbb{R} \times V_{H} $ such that
\begin{equation}
\label{eqnn: 5.399}
\widetilde{a}_\mathfrak{k}(\Tilde{u}_{{H,}\mathfrak{k}},v) = \Tilde{\lambda}_{{H,}\mathfrak{k}} \widetilde{m}_\mathfrak{k}(\Tilde{u}_{{H,}\mathfrak{k}}, v), \;\;\; \forall v \in  V_{H}
\end{equation}
where $\widetilde{a}_\mathfrak{k}(\cdot,\cdot) = a((1-\mathscr{C}_\mathfrak{k})(\cdot), (\cdot)),$ and $ \widetilde{m}_\mathfrak{k}(\cdot, \cdot) = m(\cdot, \cdot)$.

\subsection{The offline-online strategy}
\label{off_on}
The equation \eqref{eqn: 16} tells us that for each realization of $A$, we have to re-compute the correction operator $\mathscr{C}_\mathfrak{k}$, and consequently the LOD basis functions. This may be computationally very demanding although the computation of $\mathscr{C}_\mathfrak{k}$ can be parallelized. As a remedy, we use the offline-online strategy from  \cite{MR4378546}, which we recall here to be self-contained.

Due to the Petrov-Galerkin version, for any $v \in V_H$, we can rewrite $\widetilde{a}_\mathfrak{k}(u_{{H,}\mathfrak{k}},v) $ as a sum of element components on a patch $U_\mathfrak{k}(T)$, \textit{i.e.},
\begin{equation*}
    \widetilde{a}_\mathfrak{k}(u_{{H,}\mathfrak{k}},v)  = \sum\limits_{T\in\mathcal{T}_H}\widetilde{a}_{\mathfrak{k},{T}}(u_{\scriptscriptstyle{H,}\mathfrak{k}},v) 
\end{equation*}
with
\begin{equation*}
    \widetilde{a}_{\mathfrak{k},{T}}(u_{{H,}\mathfrak{k}},v) := \int\limits_{U_\mathfrak{k}(T)}A(x)(\chi_T\nabla u_{{H},\mathfrak{k}} -\nabla(\mathscr{C}_{\mathfrak{k},T}u_{{H,}\mathfrak{k}}))(x)\cdot\nabla v(x)dx.
\end{equation*}
From this point onward, we assume that the mesh size $H$ is an integer multiple of the periodicity length $\varepsilon$. If $A=A_\varepsilon$, the local contribution $\widetilde a_{\mathfrak{k},T}(\cdot, \cdot)$  is exactly the same for every mesh element $T \in \mathcal{T}_H$. This motivates to pre-compute $\widetilde a_{\mathfrak{k},T}$ for certain defect configurations, but only one single fixed $T$, in the offline phase and then to recombine these quantities in a suitable manner for each realization of $A$.

\subsubsection{Offline phase}
\label{sec: offline}
Fix an element $T \in \mathcal{T}_H$ and let $J := \{k \in \mathbb{Z}^d: \varepsilon(k+Q) \subset U_\mathfrak{k}(T)\}$ be the index set of possible defects in the patch $U_\mathfrak{k}(T)$. We further denote its cardinality by $\mathcal{N}:= \text{card}J$. For the bijective mapping $\sigma:\{1, \dots,\mathcal{N}\} \to J$, define
\begin{equation*}
\label{off coef}
    A_i := \begin{cases} 
A_\varepsilon |_{U_\mathfrak{k}(T)}; & i=0 \\
A_\varepsilon |_{U_\mathfrak{k}(T)}+\chi_{\varepsilon(\sigma(i)+Q)}B_\varepsilon; & i= 1, \dots, \mathcal{N}.
\end{cases}
\end{equation*}
This is the offline ``basis'' of coefficients where $A_i$ is constructed from $A_\varepsilon$ by considering only a single defect.  Figure \ref{offline} shows the first four offline coefficients by using such a single defect on a single patch for the random checkerboard. This is similarly done for the random erasure by removing a single element from the perfectly periodic configuration.
\begin{figure}[h]
\centering
      \includegraphics[width=9.5cm, height=2.2cm]{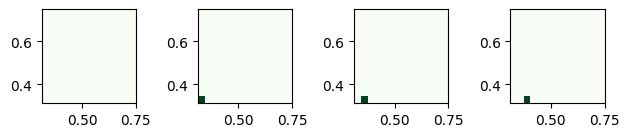}
      \caption[0.3\textwidth]{Offline coefficients $A_0, A_1, A_2, A_3$ for random checkerboard}
      \label{offline}
    \end{figure}

\noindent Next, for each offline coefficient, we pre-compute and store the LOD stiffness matrix contributions
\begin{equation*} 
\label{off stiff}
\widetilde{a}^i_{\mathfrak{k},{T}}(\varphi_j,\varphi_k)=\int\limits_{U_\mathfrak{k}(T)} A_i(x)(\chi_T\nabla\varphi_j-\nabla (\mathscr{C}_{\mathfrak{k},T}^i\varphi_j))(x)\cdot\nabla\varphi_k(x)dx
\end{equation*}
where $\mathscr{C}_{\mathfrak{k},T}^i:=\mathscr{C}_{\mathfrak{k},T}(A_i)$. We stress that the stiffnes matrix contribution for a fixed element $T$ is a coarse-scale object and inexpensive to store. At the same time, we can assemble the standard FE mass matrix.

\subsubsection{Online Phase}
\label{sec: online}
In the online phase, the global LOD stiffness matrix that corresponds to a true coefficient $A$ is approximated by a combination of offline quantities. Given a sample coefficient $A$, we locally write it as
\begin{equation*}
    A|_{U_\mathfrak{k}(T)}= \sum\limits_{i=0}^{\mathcal{N}}\mu_i A_i
\end{equation*}
for any $T \in \mathcal{T}_H$. The $\mu_i$'s are such that $\sum\limits_{i=0}^{\mathcal{N}}\mu_i=1$ for $\mu_i \in \mathbb{R}$, $i=0, \dots, \mathcal{N}$ and each $\mu_i$ is determined by the value of $\hat{b}^j_p$ for some $j$. Furthermore, $\mu_i\in \{0, 1\}$ for $i=1,\dots,\mathcal{N}$ with $\mu_0=1-\mathcal{N}_{def}$, where $\mathcal{N}_{def}$ denotes the number of defects on the patch $U_\mathfrak{k}(T)$. Then, we compute the offline-online LOD stiffness matrix contribution corresponding to an element $T$ by
\begin{equation}
\label{eqnn: 5.101}
    \widehat{a}_{\mathfrak{k},T}(\varphi_j, \varphi_k) = \sum\limits_{i=0}^{\mathcal{N}}\mu_i\widetilde{a}^i_{{\mathfrak{k}},{T}}(\varphi_j,\varphi_k).
\end{equation}
The global combined bilinear form $\widehat{a}$ is defined by
\begin{equation*}
\label{eqn: 6.34}
    \widehat{a}_\mathfrak{k}=\sum\limits_{T\in \mathcal{T}_H}\widehat{a}_{\mathfrak{k},T}.
\end{equation*}
The sum in \eqref{eqnn: 5.101} only contains $\mathcal{N}_{def}+1$ non-zero terms. Therefore, only a fraction of roughly $p$ terms need to be considered each time. Altogether, the problem is as follows:\\

\noindent \textbf{The Offline-Online MLOD problem (OLOD problem):} Find $(\hat{\lambda}_{H,\mathfrak{k}},\hat{u}_{H,\mathfrak{k}}) \in  \mathbb{R} \times V_H$ such that,
\begin{equation}
 \label{Eq:5.103}
    \widehat{a}_\mathfrak{k}(\hat{u}_{H,\mathfrak{k}}, v) = \hat{\lambda}_{H,\mathfrak{k}}\widehat{m}_\mathfrak{k}(\hat{u}_{H,\mathfrak{k}},v), \;\; \forall v \in V_H
\end{equation}
with the mass matrix $\widehat{m}_\mathfrak{k}(\cdot, \cdot)=\widetilde{m}_\mathfrak{k}(\cdot, \cdot) = m(\cdot, \cdot)$. 
The solution $\hat{u}_{H,\mathfrak{k}}$ is a good $L^2$ approximation to the exact solution $u$. In order to get an approximation in $\mathcal{H}^1$, we define the following multiscale solution, which can be efficiently computed online as well. We set
\begin{equation*}
 \hat{u}^\mathrm{ms}_{H,\mathfrak{k}} = \hat{u}_{H,\mathfrak{k}} -\widehat{\mathscr{C}}_\mathfrak{k}\hat{u}_{H,\mathfrak{k}} \;\;\;\;\; \text{with }\;\;\;\; \widehat{\mathscr{C}}_\mathfrak{k} := \sum\limits_{T\in \mathcal{T}_H}\sum\limits_{i=0}^\mathcal{N} \mu_i\mathscr{C}_{\mathfrak{k},T}^i.
\end{equation*}
This computation requires to store the correctors $\mathscr{C}_{\mathfrak{k},T}(A_i)$ in the offline phase, but again only for one fixed element.

\begin{remark}
Computationally, the offline phase is relatively expensive, but it only needs to be computed once independent of the number of samples $A$. The online phase compiles a real-time stiffness matrix from the information readily available in offline storage, making it very fast.
\end{remark}

\begin{remark}
Under the considered hypercube domain structure with periodic boundary conditions, the local LOD stiffness matrix for $A_\varepsilon$ and the selection of offline coefficients are exactly the same for every element making the computation far more efficient and reducing the required storage. For more complex domains or different boundary conditions, the method can be adapted by modifying the offline phase accordingly. This adaptation would involve computing and storing the matrix contributions for all possible patch configurations along with their corresponding offline coefficients. If the number of such configurations remains relatively small, as in the case of a highly structured mesh and domain, the additional computational cost may still be practical.
\end{remark}

\begin{remark}
    The possibility of using the same patch by transferring it through the whole domain contribute to a higher computational efficiency. While larger patches improve accuracy by capturing non-local interactions, they also increase storage and computational costs as the number of potential defect configurations in the offline phase grows exponentially with patch size. A recent development is the Super Localized Orthogonal Decomposition (SLOD) method proposed in \cite{MR4550317}, which enables the use of smaller patches without significantly compromising accuracy. While combining the offline-online strategy with the SLOD method could be attractive, our current approach is particularly based on the Petrov-Galerkin LOD formulation, and the matrix assembly is tailored to the standard LOD method. Therefore, it is not straightforward to apply our strategy to the SLOD framework in its current form.
\end{remark}

\noindent The consistency error $ \eta_\mathfrak{k}$ defined by
\begin{equation*}
    \eta_\mathfrak{k} := \sup\limits_{v_1\in V_H\backslash\{0\}}\sup\limits_{v_2\in V_H\backslash \{0\}} \frac{\abs{(\widetilde{a}_\mathfrak{k}-\widehat{a}_\mathfrak{k})(v_1,v_2)}}{\norm{v_1}_{\mathcal{A}}\norm{v_2}_{\mathcal{A}}},
\end{equation*}
and the error between correctors $\mathscr{C}_\mathfrak{k}$ and $\widehat{\mathscr{C}}_\mathfrak{k}$ are important results for the error analysis in Section \ref{sec:3}, and have been analysed in \cite{MR4378546}. There, the following upper bound on the consistency $\widetilde{a}_\mathfrak{k}-\widehat{a}_\mathfrak{k}$, that is computable in a posteriori manner, is established.

\begin{theorem}
\label{Thm:prelim}
Define for any $T\in \mathcal{T}_H$
\begin{equation*}
    E^2_T := \max\limits_{v\in
         V_H: v|_T} \frac{\norm{\left(A^{1/2}-A^{-1/2}\overline{A}\right)\chi_T\nabla v - \sum\limits_{i=0}^\mathcal{N}\mu_i\left(A^{1/2}-A^{-1/2}A_i\right)\nabla \left(\mathscr{C}_{\mathfrak{k},T}^iv\right)}^2_{L^2(U_\mathfrak{k}(T))}}{\norm{v}^2_{A,T}}
\end{equation*}
with $\overline{A}= \sum\limits_{i=0}^{\mathcal{N}}\mu_iA_i$. Then, for any $v_1, v_2 \in V_H$ it holds that
\begin{equation}
\label{EQN: 5.108}
    \abs{\left(\widehat{a}_\mathfrak{k}-\widetilde{a}_\mathfrak{k}\right)\left(v_1,v_2\right)} \lesssim \mathfrak{k}^{d/2}\left(\max\limits_{T\in \mathcal{T}_H}E_T\right)\norm{v_1}_\mathcal{A}\norm{v_2}_\mathcal{A}.
\end{equation}
Furthermore, for any $v \in V_H$, the error between correctors is bounded by
\begin{equation}
\begin{split}
\label{eq:7109}
    \norm{\left({\mathscr{C}_\mathfrak{k}}-\widehat{\mathscr{C}}_\mathfrak{k}\right){v}}^2_\mathcal{A}
    \lesssim \;\mathfrak{k}^{d}\left(\max_{T\in \mathcal{T}_H}E_T\right)^2\norm{v}^2_\mathcal{A}.
\end{split}
\end{equation}
\end{theorem}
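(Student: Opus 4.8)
\emph{Proof plan.} The plan is to reduce the global quantities to a single element $T$, to exploit the Galerkin orthogonality built into the correctors in order to replace the non-computable exact-corrector flux by the computable residual $R_T$ defining $E_T$, and finally to sum over $T$ using the finite overlap of the patches $U_\mathfrak{k}(T)$.

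First I would localize. By \eqref{eqn: 6.34} and the definition of $\widetilde{a}_\mathfrak{k}$, both bilinear forms split into element contributions, so it suffices to analyse $(\widetilde{a}_T-\widehat{a}_{\mathfrak{k},T})(v_1,v_2)$. Since $A=\sum_{i=0}^{\mathcal{N}}\mu_i A_i=\overline{A}$ on $U_\mathfrak{k}(T)$ and $\sum_i\mu_i=1$, the contributions carrying $\chi_T\nabla v_1$ cancel, leaving
\begin{equation*}
(\widetilde{a}_T-\widehat{a}_{\mathfrak{k},T})(v_1,v_2)=\int_{U_\mathfrak{k}(T)}\delta q_T\cdot\nabla v_2,\qquad \delta q_T:=\sum_{i=0}^{\mathcal{N}}\mu_i A_i\nabla\mathscr{C}_{\mathfrak{k},T}(A_i)v_1-A\nabla\mathscr{C}_{\mathfrak{k},T}(A)v_1.
\end{equation*}
The crucial structural fact is orthogonality: by the corrector equation \eqref{eqn: 6.23}, each flux $A_i(\chi_T\nabla v_1-\nabla\mathscr{C}_{\mathfrak{k},T}(A_i)v_1)$, as well as $A(\chi_T\nabla v_1-\nabla\mathscr{C}_{\mathfrak{k},T}(A)v_1)$, is $L^2$-orthogonal to $\nabla W^\mathfrak{k}(T)$; hence so is $\delta q_T$, i.e.\ $A^{-1/2}\delta q_T\perp A^{1/2}\nabla W^\mathfrak{k}(T)$.

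Next I would establish the key local estimate. Writing $R_T$ for the $L^2$-function in the numerator of $E_T$ (its $\overline{A}$-term vanishes by the same local exactness $\overline{A}=A$, so $R_T=-\sum_i\mu_i(A^{1/2}-A^{-1/2}A_i)\nabla\mathscr{C}_{\mathfrak{k},T}(A_i)v_1$), a direct computation gives $A^{-1/2}\delta q_T=R_T+A^{1/2}\nabla w_T$ with $w_T:=\sum_i\mu_i\mathscr{C}_{\mathfrak{k},T}(A_i)v_1-\mathscr{C}_{\mathfrak{k},T}(A)v_1\in W^\mathfrak{k}(T)$, so $-w_T$ is exactly the local contribution to $(\mathscr{C}_\mathfrak{k}-\widehat{\mathscr{C}}_\mathfrak{k})v_1$. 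Testing the orthogonality relation with $A^{1/2}\nabla w_T$ yields $\norm{A^{-1/2}\delta q_T}^2_{L^2(U_\mathfrak{k}(T))}=\langle A^{-1/2}\delta q_T,R_T\rangle$, whence $\norm{A^{-1/2}\delta q_T}_{L^2(U_\mathfrak{k}(T))}\le\norm{R_T}_{L^2(U_\mathfrak{k}(T))}\le E_T\norm{v_1}_{A,T}$, and similarly $\norm{w_T}_{\mathcal{A}}=\norm{A^{1/2}\nabla w_T}_{L^2(U_\mathfrak{k}(T))}\le E_T\norm{v_1}_{A,T}$. This is precisely where the non-computable exact corrector $\mathscr{C}_{\mathfrak{k},T}(A)$ drops out, leaving the offline-computable $E_T$ as an upper bound; I expect this orthogonal-projection step to be the main obstacle, since everything hinges on recognising that $A^{-1/2}\delta q_T$ differs from $R_T$ only by an element of $A^{1/2}\nabla W^\mathfrak{k}(T)$, against which $\delta q_T$ is orthogonal.

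It then remains to assemble. For \eqref{EQN: 5.108} I would apply the local Cauchy--Schwarz inequality $\abs{\int_{U_\mathfrak{k}(T)}\delta q_T\cdot\nabla v_2}\le\norm{A^{-1/2}\delta q_T}_{L^2(U_\mathfrak{k}(T))}\norm{v_2}_{A,U_\mathfrak{k}(T)}$, bound the first factor by $(\max_T E_T)\norm{v_1}_{A,T}$, and sum over $T$ via a discrete Cauchy--Schwarz; since each element lies in at most $\mathcal{O}(\mathfrak{k}^d)$ patches, $\sum_T\norm{v_2}^2_{A,U_\mathfrak{k}(T)}\lesssim\mathfrak{k}^d\norm{v_2}^2_{\mathcal{A}}$, and $\sum_T\norm{v_1}^2_{A,T}=\norm{v_1}^2_{\mathcal{A}}$, giving the factor $\mathfrak{k}^{d/2}$. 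For \eqref{eq:7109} I would use $(\mathscr{C}_\mathfrak{k}-\widehat{\mathscr{C}}_\mathfrak{k})v=-\sum_T w_T$ and the same finite-overlap argument at the level of the energy inner product, $\norm{\sum_T w_T}^2_{\mathcal{A}}\lesssim\mathfrak{k}^d\sum_T\norm{w_T}^2_{\mathcal{A}}\le\mathfrak{k}^d(\max_T E_T)^2\norm{v}^2_{\mathcal{A}}$, which produces the factor $\mathfrak{k}^d$.
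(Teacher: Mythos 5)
Your argument is correct, but note that the paper itself does not prove Theorem~\ref{Thm:prelim}: it is quoted without proof from the cited reference [M{\aa}lqvist, Verf\"urth, \emph{ESAIM M2AN} 2022], and your reconstruction follows essentially the same route as that reference --- localization to element contributions, the observation that $\delta q_T$ is a linear combination of corrector fluxes and hence $L^2$-orthogonal to $\nabla W^{\mathfrak{k}}(T)$, the resulting bound of $\|A^{-1/2}\delta q_T\|$ and $\|w_T\|_{\mathcal{A}}$ by $\|R_T\|\le E_T\|v\|_{A,T}$, and the finite-overlap summation producing $\mathfrak{k}^{d/2}$ and $\mathfrak{k}^{d}$. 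The only cosmetic remark is that you specialize to $\overline{A}=A$ (valid throughout this paper since $A|_{U_\mathfrak{k}(T)}=\sum_i\mu_iA_i$ exactly); keeping the $(A^{1/2}-A^{-1/2}\overline{A})\chi_T\nabla v$ term in $R_T$ makes the identical orthogonality argument work verbatim when the decomposition is inexact, which is why it appears in the definition of $E_T$.
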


\noindent The well-possedness of the OLOD follows due to the coercivity of $\widehat{a}_\mathfrak{k}(v,v)$ given by
\begin{equation}
    \begin{split}
        \widehat{a}_\mathfrak{k}(v,v) \geq \left(c_1 - c_2g(\mathfrak{k})- \eta_\mathfrak{k}\right) \norm{v}^2_{\mathcal{A}}
        \label{equ:7}
    \end{split}
\end{equation}
where the constants $c_1$ and $c_2$ only depend on the spectral bounds and the shape regularity. Clearly, there exist $\mathfrak{k}_0 < \mathfrak{k}$ and $\eta_0^{\mathfrak{k}} > \eta_\mathfrak{k}$ such that $c_1 - c_2g(\mathfrak{k})-\eta_\mathfrak{k}$ is bounded below by a positive constant.

\subsection{An alternate offline-online strategy}
\label{subsec:alternate}

In this section, we define an enhanced offline-online strategy for random checkerboard-type coefficients, particularly when $Q=[0,1]^d$. Note that, if $Q$ is a proper subset of $[0,1]^d$, the sum constraint $\sum_i \mu_i=1$ is necessary to guarantee the correct value of the coefficient $A$ at points $x$ where defects can never occur. One example is the case of random erasure and points $x$ corresponding to the value $A_\mathrm{per}$ independent of the realization. That means, in random erasure, the sum constraint $\sum_i \mu_i= 1$ is crucial to obtain the correct multiscale basis functions that preserve the mean value of the solution over each coarse element and one cannot relax this restriction for the underlying random erasure model. However, in the random checkerboard case, the sum constraint $\sum_i \mu_i=1$ is merely used to guarantee uniqueness of the $\mu_i$. The alternate method we propose takes advantage of this by relaxing the sum constraint to be a real number $s$ that is adapted to the underlying distribution to obtain more accurate results. 
In the explained random checkerboard example, we use the Bernoulli distribution. However, other probability distributions on the interval $[\alpha, \beta]$, such as the uniform distribution, are possible in general as well. To this end, we consider the coefficient $A$  of the following form
    \begin{equation*}
         A(x, \omega) = \sum\limits_{j \in J}\chi_{\varepsilon(j+Q)}(x)\tilde{\beta}_j(\omega)
    \end{equation*}
   for the alternate approach where $\tilde{\beta}_j$ are independent and identically distributed random variables with $\tilde{\beta}_j\in \{\alpha, \beta\}$. The index set $J$ and the offline coefficients $A_i$ are defined as in Section \ref{sec: offline}. We emphasize that, for the random checkerboard case, this is just a re-writing of the coefficient.

\noindent In contrast to the sum constraint one of coefficients in the online phase defined in Section \ref{sec: online}, we  now assume $\sum\limits_{i=0}^{\mathcal{N}}\mu_i=s$ for some $s\in \mathbb{R}^+$ with the property
\begin{equation*}
    A|_{U_\mathfrak{k}(T)}= \sum\limits_{i=0}^{\mathcal{N}}\mu_i A_i \text{ for any }T \in \mathcal{T}_H.
\end{equation*}
The idea of the new strategy is to find the value $s$ using a heuristic approach to minimize the consistency error in the one-dimensional case. More precisely, as discussed in \cite{MR4378546}, in the one-dimensional setting with the nodal interpolation operator and patches consisting only of a single element $T$, the Petrov-Galerkin LOD corresponds to a finite element method with the piecewise harmonic mean as coefficient. That is
\begin{equation*}
    \widetilde{a}^T_\mathfrak{k}({u}_{{H}},v_H) = \int\limits_T A_{\text{harm}}|_T\nabla u_H \nabla v_H dx
\end{equation*}
\noindent where the element-wise constant coefficient $ A_{\text{harm}}$ is given by
\begin{equation*}
    A_{\text{harm}}|_T := \left(\frac{1}{\abs{T}}\int_TA^{-1}dx\right)^{-1}=\frac{\abs{T}}{\varepsilon}\frac{1}{\sum\limits_{i=1}^{\mathcal{N}}\big(\tilde{\beta}_i\big)^{-1}}.
\end{equation*}
We can do the same for our offline-online LOD stiffness matrix, where we denote the coefficient by $A^\mu$. In the following, we calculate $A_\mathrm{harm}$ and $A^{\mu}_\mathrm{harm}$ explicitly and then analyze their error. We obtain $A^\mu$ as
\begin{equation}
\label{eq: Amu}
    A^\mu_{\text{harm}}|_T =\sum\limits_{i=0}^{\mathcal{N}}\mu_iA^i_{\text{harm}}|_T = \mu_0A_0 + \sum\limits_{i=1}^{\mathcal{N}}\mu_iA^i_{\text{harm}}|_T,
    \end{equation}
where the harmonic mean of $A_i$, $A^i_{\text{harm}}$ for all $ i \geq 1$ in the case of random checkerboard can be written as
\begin{equation}
\label{eq: Aharm}
    A^i_{\text{harm}} = \frac{\mathcal{N}}{\left(\mathcal{N}-1\right)\left(1/\alpha\right)+(1)\left(1/\beta\right)}.
\end{equation}
The number of $\varepsilon$ blocks in an element $T$ denoted by $\mathcal{N}$ is given by $\abs{T}/\varepsilon$. We now need to find the correct representations of $\mu_0$ and $\mu_i$, $i\geq 1$. This can easily be computed from
\begin{equation*}
      \sum\limits_{i \in J}\chi_{\varepsilon(i+Q)}(x)\tilde{\beta}_i(\omega) = \sum\limits_{i=0}^{\mathcal{N}}\mu_i A_i = A_0\mu_0 + \sum\limits_{i=1}^{\mathcal{N}}\mu_i A_i,
\end{equation*}
for $A_\mathrm{per}= \alpha$ and $B_\mathrm{per}=\beta - \alpha$ in the random checkerboard set-up, in which the underlying linear system of equations assigns $\tilde{\beta}_i$ value for each $\varepsilon$ block on the patch via the characteristic function $\chi_{\varepsilon\left(i+Q\right)}(x)$. Further observe that $A_0$ is nothing but the background coefficient $A_{\text{per}}$. Hence, we deduce
\begin{equation}
\begin{split}
\label{eq: mu}
    \mu_i = \frac{\tilde{\beta}_i-\alpha s}{\beta - \alpha}, \;\; i \geq 1, \hspace{2cm}
    \mu_0 = \frac{s\left(\beta - \alpha\right)- \sum\limits_{i=1}^{\mathcal{N}}\big(\tilde{\beta}_i-\alpha s\big)}{\beta-\alpha}.
    \end{split}
\end{equation}

\noindent By substituting \eqref{eq: Aharm} and  \eqref{eq: mu} into \eqref{eq: Amu}, we arrive at
\begin{equation*}
\begin{split}
    A^\mu_{\text{harm}}|_T 
    &=\frac{\abs{T}}{\varepsilon}\frac{1}{\beta-\alpha}\Biggl(\frac{s(\beta-\alpha)-\sum\limits_{i=1}^{\mathcal{N}}\big(\tilde{\beta}_i-\alpha s\big)}{\mathcal{N}\left(1/\alpha\right)}+\frac{\sum\limits_{i=1}^{\mathcal{N}}\big(\tilde{\beta}_i-\alpha s\big)}{\left(\mathcal{N}-1\right)\left(1/\alpha\right)+\left(1/\beta\right)}\Biggr)\\
    &=\frac{\abs{T}}{\beta-\alpha}\Bigg(\frac{s\left(\beta-\alpha\right)-\sum\limits_{i=1}^{\mathcal{N}}\big(\tilde{\beta}_i-\alpha s\big)}{\mathcal{N}\varepsilon/\alpha}+\frac{\sum\limits_{i=1}^{\mathcal{N}}\big(\tilde{\beta}_i-\alpha s\big)}{\left(\mathcal{N}-1\right)\varepsilon/\alpha+{\varepsilon}/{\beta}}\Bigg)
    \end{split}
\end{equation*}
With this, we obtain for the error that

\begin{equation*}
    \begin{split}
        A_{\text{harm}}|_T-A^\mu_{\text{harm}}|_T &= \frac{\abs{T}}{\varepsilon}\left[\frac{1}{\sum\limits_{i=1}^{\mathcal{N}}\big(\tilde{\beta}_i\big)^{-1}}-\frac{1}{\beta-\alpha}\Bigg(\frac{s\left(\beta-\alpha\right)-\sum\limits_{i=1}^{\mathcal{N}}\big(\tilde{\beta}_i-\alpha s\big)}{\mathcal{N}}\alpha + \frac{\sum\limits_{i=1}^{\mathcal{N}}\big(\tilde{\beta}_i-\alpha s\big)}{\frac{\left(\mathcal{N}-1\right)\beta+\alpha}{\alpha\beta}}\Bigg)\right]\\
        &=H\big(\tilde{\beta}\big)-s\alpha+\frac{\alpha}{\beta-\alpha}\Bigg(\frac{\sum\limits_{i=1}^\mathcal{N}\big(\tilde{\beta}_i-\alpha s\big)}{\mathcal{N}}\Bigg)\mathcal{N}-\frac{\mathcal{N}^2\alpha\beta}{\left(\beta-\alpha\right)}\Biggl[\Biggl(\frac{\sum\limits_{i=1}^\mathcal{N}\big(\tilde{\beta}_i-\alpha s\big)}{\mathcal{N}}\Biggr)\frac{1}{\left(\mathcal{N}-1\right)\beta+\alpha}\Biggr]\\
        &= H\big(\tilde{\beta}\big)-s\alpha+\frac{M\big(\tilde{\beta}-\alpha s\big)}{\left(\beta-\alpha \right)} \frac{\alpha \mathcal{N}\left(\alpha -\beta \right)}{\left(\mathcal{N}-1\right)\beta+\alpha}\\
        &=H\big(\tilde{\beta}\big)-s\alpha - \alpha M\big(\tilde{\beta}-\alpha s\big)\frac{1}{\left(1-1/\mathcal{N}\right)\beta+\alpha/\mathcal{N}}
    \end{split}
\end{equation*}
with $H\big(\tilde{\beta}\big)=\mathcal{N}\left(\sum\limits_{i=1}^{\mathcal{N}}\big(\tilde{\beta}_i\big)^{-1}\right)^{-1}$ and $ M\big(\tilde{\beta}-\alpha s\big)=\left({\sum\limits_{i=1}^\mathcal{N}\big(\tilde{\beta}_i-\alpha s\big)}\right)/{\mathcal{N}}$. Our goal is to make this error as small as possible because this will imply a small consistency error. We consider the limit $\mathcal N\to \infty$ which corresponds to vanishing periodicity length $\varepsilon \to 0$, and try to find $s$ such that the error vanishes in this limit. In other words, we find $s$ in such a way that in the homogenization limit, there is no consistency error and our offline-online strategy retrieves the homogenized coefficient. Denote
\begin{equation*}
\label{mean}
  \overline{H} = \lim\limits_{\mathcal{N}\to \infty} H\big({\tilde{\beta}}\big) \;\;\;\text{ and }\;\;\; \overline{M} = \lim\limits_{\mathcal{N}\to \infty} M\big({\tilde{\beta}}\big).
\end{equation*} 
Note that $\overline{H}$ and $\overline{M}$ can be deduced from the probability law of $\tilde \beta$ as certain expectations by the law of large numbers.
Therefore, as $\mathcal{N}\to \infty$, we obtain
\begin{equation*}
  A_{\text{harm}}|_T-A^\mu_{\text{harm}}|_T = \overline{H}-s\alpha -\frac{\alpha}{\beta}\left(\overline{M}-\alpha s\right) =0.
\end{equation*}

\noindent Hence the optimal $s$ is obtained when
\begin{equation*}
    s = \frac{\beta \overline{H} - \alpha \overline{M}}{\alpha (\beta - \alpha)}.
\end{equation*}

\begin{example}
\label{ex:2.7}
If $\tilde{\beta}$ is Bernoulli distributed with probability $p$ of having a $\beta$ and  probability $1-p$ for an $\alpha$, then we have
\begin{equation*}
    \overline{M} = \alpha\left(1-p\right)+ \beta p \;\;\;\text{ and } \;\;\;  \overline{H} = \frac{\alpha \beta}{\beta+\left(\alpha-\beta\right)p}.
\end{equation*}
Then, $s$ is given by
\begin{equation*}
    s  = 1+\frac{\left(\beta-\alpha\right)p^2}{\beta + \left(\alpha-\beta\right)p}.
\end{equation*}
 Since the alternate $s$-value deviates more from $1$ as $p$ increases, we expect the influence of our new strategy to be greater for larger $p$. This will be studied in more detail numerically in Section \ref{sec:5}, specifically numerical experiments for values of $s$ in accordance with the Example \ref{ex:2.7} are illustrated in Figures \ref{fig:com1} and \ref{fig:com2} in Section \ref{sec:5} below.
\end{example}

\section{Preliminaries for the error analysis}
\label{sec:3}

Define the eigenspace spanned by orthonormal eigenfunctions $u^{(1)}, \cdots, u^{(l)}$ by the $l$-lowermost eigenvalues $\lambda^{(1)}, \cdots, \lambda^{(l)}$ of \eqref{eqnn: 2.7} by
\begin{equation*}
    {E^{(l)}}(\lambda^{(l)}) := \text{span}\left\{u^{(1)}, \cdots, u^{(l)}\right\}. 
\end{equation*}

\noindent In the following analysis, we rely on the eigenvalue analysis by using compact operators and spectral theory introduced by Babu\v{s}ka and Osborn in \cite{MR1893416}. In the next section, we derive several preliminary results from compact operators useful in the error analysis.

Suppose $T$, $T^1_H: \mathcal{H}^1_\#(\Omega) \to \mathcal{H}^1_\#(\Omega)$ be the solution operators corresponding to the continuous eigenvalue problem written in \eqref{eqnn: 2.7} and a certain LOD based multiscale problem (say $M1$) respectively, and are defined as follows.

\noindent Find $Tf\in \mathcal{H}^1_\#(\Omega) $ such that
\begin{equation*}
\label{E: 5.67}
  a(Tf, v) = m(f,v), \;\; \forall v\in \mathcal{H}^1_\#(\Omega).
\end{equation*}

\noindent Find $T^1_Hf\in V_H \subset \mathcal{H}^1_\#(\Omega)$ such that
\begin{equation}
\label{E: 5.68}
    a_1\left(T^1_Hf, v\right) = m_1\left(f,v\right), \;\; \forall v\in V_H.
\end{equation}
By Babu\v{s}ka and Osborn theory \cite{MR1115240}, the eigenvalues of $T$ are the reciprocals of the eigenvalues of the equation \eqref{eqnn: 2.7} and both problems produce the same eigenfunctions. Similarly, $T^1_H$ has the reciprocal eigenvalues (when non-zero) and equal eigenfunctions of $M1$. Our error analysis of the offline-online strategy will rely on perturbation arguments, namely interpreting the offline-online LOD as variant of the LOD with different stiffness and mass matrix. Therefore, we also define an approximate solution operator $T^2_H$ possessing the reciprocal eigenvalues and similar eigenfunctions of a second LOD based multiscale problem (say $M2$) by the following. \newline

\noindent Find $T^2_Hf \in V_H$ such that
\begin{equation}
\label{E: 5.69}
    a_2\left(T^2_Hf,v\right) = m_2\left(f,v\right), \;\; \forall v\in V_H.
\end{equation}

\noindent In all cases above $f \in \mathcal{H}^1_\#(\Omega)$. For $f\in V_H$, the function $T^2_Hf$ is defined and is taken as a map on $V_H$. While $T$ is viewed as an operator on $\mathcal{H}^1_\#(\Omega)$, the operator $T^1_H$ can be considered either to be on $\mathcal{H}^1_\#(\Omega)$ or on $V_H$. For the perturbation analysis, we need to relate the error $T^1_H-T^2_H$ to the errors in $m_1 - m_2$ and $a_1 - a_2$ in the following.

\begin{lemma}
\label{L: 6.2.1}
Suppose $T^1_Hf$ and $T^2_Hf$ are approximated solutions to the problem \eqref{eqnn: 2.7} solved by the numerical methods as defined in problems \eqref{E: 5.68} and \eqref{E: 5.69} respectively. Then it holds that
\begin{equation*}
\begin{split}
     \hspace{2cm} \norm{T^1_Hf-T^2_Hf}^2_{\mathcal{A}}  & \lesssim \abs{\left(m_1-m_2\right)\left(f,T^1_Hf-T^2_Hf\right)}\\
        &\hspace{1.5cm} + \abs{\left(a_2-a_1\right)\left(T^2_Hf, T^1_Hf-T^2_Hf\right)}.
\end{split}
\end{equation*}
\end{lemma}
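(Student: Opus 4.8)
The plan is to run a standard Strang/Galerkin-type perturbation argument, exploiting that $T^1_Hf$ and $T^2_Hf$ both lie in the common test-and-trial space $V_H$, so that their difference is itself an admissible test function. Set $w := T^1_Hf^{(l+j)} - T^2_Hf^{(l+j)} \in V_H$ and subtract the defining identities \eqref{E: 5.68} and \eqref{E: 5.69}, each valid for all $v \in V_H$:
\begin{equation*}
a_1(T^1_Hf^{(l+j)}, v) - a_2(T^2_Hf^{(l+j)}, v) = (m_1 - m_2)(f^{(l+j)}, v).
\end{equation*}
Writing $T^1_Hf^{(l+j)} = T^2_Hf^{(l+j)} + w$ in the first argument and peeling off the $a_1$-contribution of $w$ yields
\begin{equation*}
a_1(w, v) = (m_1 - m_2)(f^{(l+j)}, v) + (a_2 - a_1)(T^2_Hf^{(l+j)}, v), \qquad \forall v \in V_H,
\end{equation*}
which already isolates exactly the two right-hand-side contributions appearing in the claim.

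I would then test this identity with the admissible choice $v = w$, so that the right-hand side becomes $(m_1 - m_2)(f^{(l+j)}, w) + (a_2 - a_1)(T^2_Hf^{(l+j)}, w)$ and, after taking absolute values, is bounded by the sum of the two terms in the statement. It remains only to bound the left-hand side $a_1(w, w)$ from below by $\norm{w}_{\mathcal{A}}^2$. This is where coercivity of $a_1$ on $V_H$ enters: from a lower bound of the type \eqref{equ:7}, valid once the localization parameter $\mathfrak{k}$ is chosen large enough that the perturbation terms $g(\mathfrak{k})$ and $\eta_\mathfrak{k}$ are sufficiently small, one obtains $a_1(w, w) \gtrsim \norm{w}_{\mathcal{A}}^2$ uniformly in the sample. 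Combining the two estimates gives the assertion.

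The derivation is essentially algebraic once the correct splitting is performed, and the single point that genuinely requires care is the coercivity step. Since $a_1$ is in general a Petrov-Galerkin (hence non-symmetric) LOD form assembled from the correctors $\mathscr{C}_\mathfrak{k}$, its coercivity with respect to the energy norm $\norm{\cdot}_{\mathcal{A}}$ is not automatic and must be inherited from the established bound \eqref{equ:7} together with Theorem \ref{thm:5.2.4}. I would therefore record this coercivity as a standing assumption on the admissible methods $M1$ and $M2$ and invoke \eqref{equ:7} (and its analogue for $a_1 = \widetilde{a}_\mathfrak{k}$) to justify it. No control on the size of the hidden constant is needed, as the lemma serves only as an intermediate step feeding the subsequent eigenvalue and eigenfunction perturbation estimates.
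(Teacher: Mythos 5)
Your argument is correct and is essentially the paper's own proof: both rest on the identity $a_1(w,w) = (m_1-m_2)(f^{(l+j)},w) + (a_2-a_1)(T^2_Hf^{(l+j)},w)$ for $w = T^1_Hf^{(l+j)}-T^2_Hf^{(l+j)} \in V_H$ (the paper obtains it by adding and subtracting $a_2(T^2_Hf^{(l+j)},w)$ inside $a_1(w,w)$, you by subtracting the two defining identities first), followed by coercivity of $a_1$ in the energy norm. Your explicit flagging of that coercivity step, which the paper invokes silently in its first inequality, is a welcome extra precaution but not a departure from the paper's route.
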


\begin{proof}
\noindent We deduce together with \eqref{E: 5.68} and \eqref{E: 5.69} that
\begin{align*}
\label{eqn: comp}
       \norm{T^1_Hf-T^2_Hf}^2_{\mathcal{A}}   &\lesssim a_1\left(T^1_Hf-T^2_Hf,T^1_Hf-T^2_Hf\right)\NNN
       &= a_1\left(T^1_Hf,T^1_Hf-T^2_Hf\right)-a_2\left(T^2_Hf,T^1_Hf-T^2_Hf\right) \NNN
         & \hspace{2cm} +a_2\left(T^2_Hf,T^1_Hf-T^2_Hf\right)- a_1\left(T^2_Hf,T^1_Hf-T^2_Hf\right)\NNN
          & \leq \abs{(m_1-m_2)(f,T^1_Hf-T^2_Hf)} + \abs{(a_2-a_1)(T^2_Hf, T^1_Hf-T^2_Hf)}.\qedhere
    \end{align*}
\end{proof}

\section{Error analysis}
\label{sec:4}
In this section, we derive error estimates for eigenfunctions and eigenvalues for the MLOD and the OLOD methods. Recall that in the MLOD formulation, we have considered the trial space to be the multiscale space and the test space to be the classical finite element space on the coarse mesh. Consequently, the stiffness and mass matrix in the MLOD are based on the coarse FE space and different from the ones used in the Galerkin LOD method in \cite{MR3343928}. We therefore start  with several preliminary results particularly estimating the errors from these changes. Note that our final goal is to obtain estimates for the OLOD method, for which --  similar to \cite{MR1035176} --- the MLOD method is introduced and analyzed as an intermediate step.

\subsection{Preliminary estimates for the MLOD}
\label{sec: 4.1}
The following two lemmas derive error estimate for choosing the FEM mass matrix and Petrov-Galerkin LOD stiffness matrix with respect to that of the LOD method. 
\begin{lemma}
For any $v_1,v_2 \in V=\mathcal{H}^1_\#(\Omega)$,  it holds that
\label{Lem: 7.75}
\begin{equation*}
    \begin{split}
         \frac{\abs{\left({m}_\mathfrak{k}-\widetilde{m}_\mathfrak{k}\right)(v_1,v_2)}}{\norm{v_1}_{\mathcal{A}}\norm{v_2}_{\mathcal{A}}} &\lesssim H^2.
    \end{split}
\end{equation*}
\end{lemma}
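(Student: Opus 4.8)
The plan is to expand the difference of the two mass-type bilinear forms and reduce everything to the single key estimate of Lemma \ref{Lem: 5.1.4}. Setting $w_i := \mathscr{C}_\mathfrak{k} v_i$ for $i=1,2$, I observe that $w_i \in W_H = \mathrm{kern}(I_H)$ by construction of the correction operator. Since $\widetilde m_\mathfrak{k}=m$ and $m_\mathfrak{k}(\cdot,\cdot)=m((1-\mathscr{C}_\mathfrak{k})\cdot,(1-\mathscr{C}_\mathfrak{k})\cdot)$, expanding the bilinear form bilinearly and cancelling the common $m(v_1,v_2)$ term yields
\begin{equation*}
(m_\mathfrak{k}-\widetilde m_\mathfrak{k})(v_1,v_2) = -\,m(v_1,w_2) - m(w_1,v_2) + m(w_1,w_2).
\end{equation*}
This is the whole point of the reduction: every surviving term has at least one argument lying in the kernel $W_H$, which is exactly the situation Lemma \ref{Lem: 5.1.4} controls.

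The first step is then to apply Lemma \ref{Lem: 5.1.4} to each of the three terms. For $m(v_1,w_2)$ and $m(w_1,v_2)$ the application is immediate, treating $v_1,v_2\in V$ as the first argument and $w_2,w_1\in W_H$ as the kernel argument, giving the bounds $|m(v_1,w_2)|\lesssim H^2\norm{v_1}_{\mathcal{A}}\norm{w_2}_{\mathcal{A}}$ and $|m(w_1,v_2)|\lesssim H^2\norm{w_1}_{\mathcal{A}}\norm{v_2}_{\mathcal{A}}$. For the quadratic term $m(w_1,w_2)$, I use that $W_H\subset V$, so I may take $w_1\in V$ as the first slot and $w_2\in W_H$ as the kernel slot, obtaining $|m(w_1,w_2)|\lesssim H^2\norm{w_1}_{\mathcal{A}}\norm{w_2}_{\mathcal{A}}$.

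The second step is to control the correctors in the energy norm, i.e.\ to show $\norm{w_i}_{\mathcal{A}}=\norm{\mathscr{C}_\mathfrak{k} v_i}_{\mathcal{A}}\lesssim \norm{v_i}_{\mathcal{A}}$. Here I invoke the Remark stating that the ideal corrector $\mathscr{C}$ is the $a$-orthogonal projection onto $W_H$, hence a contraction, $\norm{\mathscr{C}v}_{\mathcal{A}}\le\norm{v}_{\mathcal{A}}$, together with the localization estimate of Theorem \ref{thm:5.2.4}, $\norm{(\mathscr{C}-\mathscr{C}_\mathfrak{k})v}_{\mathcal{A}}\lesssim g(\mathfrak{k})\norm{v}_{\mathcal{A}}$. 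A triangle inequality then gives $\norm{\mathscr{C}_\mathfrak{k} v}_{\mathcal{A}}\le (1+Cg(\mathfrak{k}))\norm{v}_{\mathcal{A}}\lesssim\norm{v}_{\mathcal{A}}$, since $g(\mathfrak{k})$ is uniformly bounded. Substituting this stability bound into the three estimates from the first step and summing yields $|(m_\mathfrak{k}-\widetilde m_\mathfrak{k})(v_1,v_2)|\lesssim H^2\norm{v_1}_{\mathcal{A}}\norm{v_2}_{\mathcal{A}}$, and dividing through by $\norm{v_1}_{\mathcal{A}}\norm{v_2}_{\mathcal{A}}$ gives the claim.

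The only genuinely non-routine point is the energy-norm stability of the \emph{localized} corrector $\mathscr{C}_\mathfrak{k}$ used in the second step: the contraction property is immediate only for the ideal operator $\mathscr{C}$, and transferring it to $\mathscr{C}_\mathfrak{k}$ relies on Theorem \ref{thm:5.2.4}. Everything else is a direct, bilinear expansion plus three applications of a single pre-established lemma, so I expect no further obstacles.
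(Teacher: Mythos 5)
Your proposal is correct and follows essentially the same route as the paper's proof: the same bilinear expansion leaving only terms with a corrector in the kernel slot, three applications of Lemma \ref{Lem: 5.1.4}, and energy-norm stability of $\mathscr{C}_\mathfrak{k}$ obtained from the stability of the ideal corrector $\mathscr{C}$ plus the localization bound of Theorem \ref{thm:5.2.4}. No gaps.
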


\begin{proof}
We can deduce that
\begin{equation*}
    \begin{split}
        \abs{({m}_\mathfrak{k}-\widetilde{m}_\mathfrak{k})(v_1,v_2)}
        &=\abs{m(v_1-\mathscr{C}_\mathfrak{k}v_1, v_2-\mathscr{C}_\mathfrak{k}v_2)-m(v_1,v_2)}\\
        &=\abs{m(v_1,v_2)-m(v_1,\mathscr{C}_\mathfrak{k}v_2)-m(\mathscr{C}_\mathfrak{k}v_1,v_2)+m(\mathscr{C}_\mathfrak{k}v_1,\mathscr{C}_\mathfrak{k}v_2)-m(v_1,v_2)}\\
        &\leq \abs{m(v_1,\mathscr{C}_\mathfrak{k}v_2)+ m(v_2,\mathscr{C}_\mathfrak{k}v_1)}+\abs{m(\mathscr{C}_\mathfrak{k}v_1,\mathscr{C}_\mathfrak{k}v_2)}.
    \end{split}
\end{equation*}
Since $\mathscr{C}_\mathfrak{k}(\cdot) \in W \subset V$, by Lemma \ref{Lem: 5.1.4}, we get that
\begin{equation*}
\begin{split}
\label{eqnnn: 5.68}
    \abs{\left({m}_\mathfrak{k}-\widetilde{m}_\mathfrak{k}\right)(v_1,v_2)} & \lesssim H^2\left(\norm{v_1}_{\mathcal{A}}\norm{\mathscr{C}_\mathfrak{k}v_2}_{\mathcal{A}}+\norm{v_2}_{\mathcal{A}}\norm{\mathscr{C}_\mathfrak{k}v_1}_{\mathcal{A}} +\norm{\mathscr{C}_\mathfrak{k}v_1}_{\mathcal{A}}\norm{\mathscr{C}_\mathfrak{k}v_2}_{\mathcal{A}}\right) \\
    & \leq H^2\Big(\norm{v_1}_{\mathcal{A}}\left(\norm{(\mathscr{C}-\mathscr{C}_\mathfrak{k})v_2}_{\mathcal{A}}+\norm{\mathscr{C}v_2}_{\mathcal{A}}\right)+\norm{v_2}_{\mathcal{A}}\left(\norm{(\mathscr{C}-\mathscr{C}_\mathfrak{k})v_1}_{\mathcal{A}}+\norm{\mathscr{C}v_1}_{\mathcal{A}}\right)\\
   & \hspace{4.8cm}+ \left( \norm{\left(\mathscr{C}-\mathscr{C}_\mathfrak{k}\right)v_1}_{\mathcal{A}}+\norm{\mathscr{C}v_1}_{\mathcal{A}}\right)\left(\norm{(\mathscr{C}-\mathscr{C}_\mathfrak{k})v_2}_{\mathcal{A}}+\norm{\mathscr{C}v_2}_{\mathcal{A}}\right)\Big) \\
 & \lesssim H^2\left(3 + 4g(\mathfrak{k}) + g^2(\mathfrak{k})\right)\norm{v_1}_{\mathcal{A}}\norm{v_2}_{\mathcal{A}}.
    \end{split}
\end{equation*}

The last step follows by Theorem \ref{thm:5.2.4} and the stability of $\mathscr{C}$ in the energy norm. The notion $g^2(\mathfrak{k})$ is used for $(g(\mathfrak{k}))^2$. Since $H^2$ dominates the order, we deduce the assertion.
\end{proof}

\begin{lemma}
For any $v_1, v_2 \in V=\mathcal{H}^1_\#(\Omega)$, we have
\label{Lem: 764}
\begin{equation*}
    \frac{\abs{({a}_\mathfrak{k}-\widetilde{a}_\mathfrak{k})(v_1,v_2)}}{\norm{v_1}_{\mathcal{A}}\norm{v_2}_{\mathcal{A}}}\lesssim  g(\mathfrak{k}).
\end{equation*}
\end{lemma}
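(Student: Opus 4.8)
The plan is to reduce the difference to a single term and then exploit the $a$-orthogonality of the ideal corrector. Starting from the definitions $a_\mathfrak{k}(\cdot,\cdot) = a((1-\mathscr{C}_\mathfrak{k})(\cdot),(1-\mathscr{C}_\mathfrak{k})(\cdot))$ and $\widetilde{a}_\mathfrak{k}(\cdot,\cdot) = a((1-\mathscr{C}_\mathfrak{k})(\cdot),(\cdot))$, bilinearity in the second argument gives
\begin{equation*}
(a_\mathfrak{k}-\widetilde{a}_\mathfrak{k})(v_1,v_2) = a\big((1-\mathscr{C}_\mathfrak{k})v_1,\,(1-\mathscr{C}_\mathfrak{k})v_2 - v_2\big) = -\,a\big((1-\mathscr{C}_\mathfrak{k})v_1,\,\mathscr{C}_\mathfrak{k}v_2\big).
\end{equation*}
The key observation is that $\mathscr{C}_\mathfrak{k}v_2 \in W_H$, since each local contribution $\mathscr{C}_{\mathfrak{k},T}v_2$ lies in $W^\mathfrak{k}(T)\subset W_H$, so the $a$-orthogonality of the Remark can be brought into play.

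First I would insert the ideal corrector by splitting $(1-\mathscr{C}_\mathfrak{k})v_1 = (1-\mathscr{C})v_1 + (\mathscr{C}-\mathscr{C}_\mathfrak{k})v_1$. Because $\mathscr{C}_\mathfrak{k}v_2\in W_H$ and $a((1-\mathscr{C})v_1,w)=0$ for every $w\in W_H$, the first contribution vanishes identically, leaving the single term
\begin{equation*}
(a_\mathfrak{k}-\widetilde{a}_\mathfrak{k})(v_1,v_2) = -\,a\big((\mathscr{C}-\mathscr{C}_\mathfrak{k})v_1,\,\mathscr{C}_\mathfrak{k}v_2\big).
\end{equation*}
Applying Cauchy--Schwarz in the energy norm then yields $\abs{(a_\mathfrak{k}-\widetilde{a}_\mathfrak{k})(v_1,v_2)} \le \norm{(\mathscr{C}-\mathscr{C}_\mathfrak{k})v_1}_\mathcal{A}\,\norm{\mathscr{C}_\mathfrak{k}v_2}_\mathcal{A}$. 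The first factor is controlled by $g(\mathfrak{k})\norm{v_1}_\mathcal{A}$ directly via Theorem \ref{thm:5.2.4}, and for the second I would write $\mathscr{C}_\mathfrak{k}v_2 = \mathscr{C}v_2 - (\mathscr{C}-\mathscr{C}_\mathfrak{k})v_2$ and combine the energy-stability of $\mathscr{C}$ with Theorem \ref{thm:5.2.4} once more to get $\norm{\mathscr{C}_\mathfrak{k}v_2}_\mathcal{A}\lesssim (1+g(\mathfrak{k}))\norm{v_2}_\mathcal{A}\lesssim \norm{v_2}_\mathcal{A}$. Multiplying the two bounds produces the claimed rate $\lesssim g(\mathfrak{k})$.

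The single genuine subtlety -- which I regard as the heart of the argument rather than a real obstacle -- is recognizing the orthogonality splitting. A naive Cauchy--Schwarz applied to $-a((1-\mathscr{C}_\mathfrak{k})v_1,\mathscr{C}_\mathfrak{k}v_2)$ would only deliver an $\mathcal{O}(1)$ bound, as both $\norm{(1-\mathscr{C}_\mathfrak{k})v_1}_\mathcal{A}$ and $\norm{\mathscr{C}_\mathfrak{k}v_2}_\mathcal{A}$ are merely uniformly bounded. Subtracting the exactly vanishing ideal part $a((1-\mathscr{C})v_1,\mathscr{C}_\mathfrak{k}v_2)=0$ is precisely what replaces one factor by the small quantity $\norm{(\mathscr{C}-\mathscr{C}_\mathfrak{k})v_1}_\mathcal{A}$, thereby generating the decay $g(\mathfrak{k})$; every other step is a routine application of the localization estimate and corrector stability.
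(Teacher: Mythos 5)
Your proof is correct and follows essentially the same route as the paper: both reduce the difference to $-a((1-\mathscr{C}_\mathfrak{k})v_1,\mathscr{C}_\mathfrak{k}v_2)$, insert the ideal corrector $\mathscr{C}$, and conclude with Theorem \ref{thm:5.2.4} together with the energy stability of $\mathscr{C}$. The only difference is that you split the first argument and invoke the $a$-orthogonality exactly (legitimate since $\mathscr{C}_\mathfrak{k}v_2\in W_H$), whereas the paper splits the second argument via $-\mathscr{C}_\mathfrak{k}v_2=(\mathscr{C}-\mathscr{C}_\mathfrak{k})v_2-\mathscr{C}v_2$; your variant is, if anything, the cleaner application of the orthogonality.
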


\begin{proof}
We use the definitions of $a_\mathfrak{k}\left(\cdot, \cdot\right)$ and $\widetilde{a}_\mathfrak{k}(\cdot, \cdot)$ to obtain
\begin{equation*}
    \begin{split}
\left({a}_\mathfrak{k}-\widetilde{a}_\mathfrak{k}\right)\left(v_1,v_2\right) &=  a\left((1-\mathscr{C}_\mathfrak{k})v_1,(1-\mathscr{C}_\mathfrak{k})v_2\right) - a\left((1-\mathscr{C}_\mathfrak{k})v_1,v_2\right)\\
      & = a\left((1-\mathscr{C}_\mathfrak{k})v_1,-\mathscr{C}_\mathfrak{k}v_2\right)\\
      & = a\left((1-\mathscr{C})v_1+(\mathscr{C}-\mathscr{C}_\mathfrak{k})v_1,-\mathscr{C}_\mathfrak{k}v_2\right) \\
      & =  a((\mathscr{C}-\mathscr{C}_\mathfrak{k})v_1,-\mathscr{C}_\mathfrak{k}v_2) \\
      & = a((\mathscr{C}-\mathscr{C}_\mathfrak{k})v_1,(\mathscr{C}-\mathscr{C}_\mathfrak{k})v_2)  + a((\mathscr{C}-\mathscr{C}_\mathfrak{k})v_1,-\mathscr{C}v_2) 
    \end{split}
\end{equation*}
The second last equality follows by the $a$-orthogonality property of the correction operator $\mathscr{C}$. Then by Theorem \ref{thm:5.2.4} and the stability of $\mathscr{C}$ in the energy norm yields that
\begin{equation*}
    \begin{split}
({a}_\mathfrak{k}-\widetilde{a}_\mathfrak{k})(v_1,v_2) 
        & \lesssim \norm{(\mathscr{C}-\mathscr{C}_\mathfrak{k})v_1}_{\mathcal{A}}\norm{(\mathscr{C}-\mathscr{C}_\mathfrak{k})v_2}_{\mathcal{A}} + \norm{(\mathscr{C}-\mathscr{C}_\mathfrak{k})v_1}_{\mathcal{A}}\norm{v_2}_{\mathcal{A}}\\
        & \lesssim \norm{(\mathscr{C}-\mathscr{C}_\mathfrak{k})v_1}_{\mathcal{A}}\left(\norm{(\mathscr{C}-\mathscr{C}_\mathfrak{k})v_2}_{\mathcal{A}} + \norm{v_2}_{\mathcal{A}} \right)\\
         & \lesssim g(\mathfrak{k}) \left(g(\mathfrak{k})+1 \right) \norm{v_1}_{\mathcal{A}}\norm{v_2}_{\mathcal{A}}.
    \end{split}
\end{equation*}
Clearly, $g(\mathfrak{k})$ decides the order of convergence.
\end{proof}

\begin{remark}
\noindent If taken $\mathfrak{k}$ large enough such that $g(\mathfrak{k}) \approx H^2$, Lemmas \ref{Lem: 7.75} and \ref{Lem: 764} tells us that the error between mass matrices due to the selection of the FEM mass matrix is the same as the error due to the Petrov-Galerkin localization in the stiffness matrix. 
\end{remark}

\noindent In the next lemma, we introduce a strategy for the eigenvalue error estimates for the MLOD method. 

\begin{lemma}
\label{Lem: 7.4.4}
Suppose $(\lambda, u)$ is an eigenpair of \eqref{eqnn: 2.7}. Further let, $\norm{v^\mathrm{ms}_{H,\mathfrak{k}}}_{L^2(\Omega)}=1$ with $v^\mathrm{ms}_{H,\mathfrak{k}}, = v_{H,\mathfrak{k}} - \mathscr{C}_\mathfrak{k}v_{H,\mathfrak{k}}$  and $\widetilde{m}_\mathfrak{k}(v_{H,\mathfrak{k}},v_{H,\mathfrak{k}}) \neq 0$ for $v_{H,\mathfrak{k}} \in V_H \subset \mathcal{H}^1_\#(\Omega)$ and $v^\mathrm{ms}_{H,\mathfrak{k}} \in V^\mathrm{ms}_{H,\mathfrak{k}} \subset \mathcal{H}^1_\#(\Omega)$ . Consider the MLOD formulation of \eqref{eqnn: 2.7} given in \eqref{eqnn: 5.399}. Then,
\begin{equation*}
\begin{split}
    \frac{\widetilde{a}_\mathfrak{k}(v_{H,\mathfrak{k}},v_{H,\mathfrak{k}})}{\widetilde{m}_\mathfrak{k}(v_{H,\mathfrak{k}},v_{H,\mathfrak{k}})}-\lambda \leq & \norm{u-v^\mathrm{ms}_{H,\mathfrak{k}}}^2_{\mathcal{A}}-\lambda\norm{u-v^\mathrm{ms}_{H,\mathfrak{k}}}^2_{L^2(\Omega)} + \norm{(\mathscr{C}-\mathscr{C}_\mathfrak{k})v_{H,\mathfrak{k}}}_{\mathcal{A}}\norm{v_{H,\mathfrak{k}}}_{\mathcal{A}}\\
         & \hspace{2cm}+ \frac{\widetilde{a}_\mathfrak{k}(v_{H,\mathfrak{k}},v_{H,\mathfrak{k}})}{\widetilde{m}_\mathfrak{k}(v_{H,\mathfrak{k}},v_{H,\mathfrak{k}})}\left({m}_\mathfrak{k}(v_{H,\mathfrak{k}},v_{H,\mathfrak{k}})-\widetilde{m}_\mathfrak{k}(v_{H,\mathfrak{k}},v_{H,\mathfrak{k}})\right).
\end{split}
\end{equation*}
\end{lemma}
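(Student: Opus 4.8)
The plan is to write the Rayleigh-quotient error $\frac{\widetilde{a}_\mathfrak{k}(v,v)}{\widetilde{m}_\mathfrak{k}(v,v)}-\lambda$, where I abbreviate $v:=v_{H,\mathfrak{k}}$ and $v^\mathrm{ms}:=(1-\mathscr{C}_\mathfrak{k})v$, as a telescoping sum of three differences obtained by inserting the two intermediate quotients $\widetilde{a}_\mathfrak{k}(v,v)/m_\mathfrak{k}(v,v)$ and $a_\mathfrak{k}(v,v)/m_\mathfrak{k}(v,v)$. The point of this split is that each of the three pieces matches exactly one group of terms on the right-hand side of the claim. Before starting I would record the two consequences of the definitions that drive everything: since $a_\mathfrak{k}$ and $m_\mathfrak{k}$ are evaluated on $(1-\mathscr{C}_\mathfrak{k})v=v^\mathrm{ms}$, the normalization $\norm{v^\mathrm{ms}}_{L^2(\Omega)}=1$ gives $m_\mathfrak{k}(v,v)=\norm{v^\mathrm{ms}}^2_{L^2(\Omega)}=1$ and $a_\mathfrak{k}(v,v)=\norm{v^\mathrm{ms}}^2_{\mathcal{A}}$, while $\widetilde{m}_\mathfrak{k}=m$.

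For the innermost piece $a_\mathfrak{k}(v,v)/m_\mathfrak{k}(v,v)-\lambda$, the preceding observation reduces it to $\norm{v^\mathrm{ms}}^2_{\mathcal{A}}-\lambda$. Here I would apply the standard Babu\v{s}ka--Osborn/Rayleigh identity: expanding $\norm{u-v^\mathrm{ms}}^2_{\mathcal{A}}-\lambda\norm{u-v^\mathrm{ms}}^2_{L^2(\Omega)}$ and using that $(\lambda,u)$ solves \eqref{eqnn: 2.7}, so $a(u,u)=\lambda m(u,u)$ and $a(u,v^\mathrm{ms})=\lambda m(u,v^\mathrm{ms})$, all mixed terms cancel and, together with $\norm{v^\mathrm{ms}}_{L^2(\Omega)}=1$, one is left exactly with $\norm{v^\mathrm{ms}}^2_{\mathcal{A}}-\lambda$. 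This produces the first two terms of the claim, as an identity.

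The middle piece is $\widetilde{a}_\mathfrak{k}(v,v)-a_\mathfrak{k}(v,v)$ (the denominator being $1$), which equals $a((1-\mathscr{C}_\mathfrak{k})v,\mathscr{C}_\mathfrak{k}v)$ by the definitions of the two forms. I would expand both arguments around the ideal corrector, writing $(1-\mathscr{C}_\mathfrak{k})v=(1-\mathscr{C})v+(\mathscr{C}-\mathscr{C}_\mathfrak{k})v$ and $\mathscr{C}_\mathfrak{k}v=\mathscr{C}v-(\mathscr{C}-\mathscr{C}_\mathfrak{k})v$. Since $\mathscr{C}v,\mathscr{C}_\mathfrak{k}v\in W_H$ and hence $(\mathscr{C}-\mathscr{C}_\mathfrak{k})v\in W_H$, the Galerkin orthogonality of the ideal corrector, $a((1-\mathscr{C})v,w)=0$ for $w\in W_H$, annihilates the two terms carrying the factor $(1-\mathscr{C})v$, leaving $a((\mathscr{C}-\mathscr{C}_\mathfrak{k})v,\mathscr{C}v)-\norm{(\mathscr{C}-\mathscr{C}_\mathfrak{k})v}^2_{\mathcal{A}}$. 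Dropping the non-positive square and bounding the remaining term by Cauchy--Schwarz together with the energy stability $\norm{\mathscr{C}v}_{\mathcal{A}}\le\norm{v}_{\mathcal{A}}$ of the ideal corrector gives precisely the third term $\norm{(\mathscr{C}-\mathscr{C}_\mathfrak{k})v}_{\mathcal{A}}\norm{v}_{\mathcal{A}}$, with no lost constant.

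Finally, the outer piece $\widetilde{a}_\mathfrak{k}(v,v)/\widetilde{m}_\mathfrak{k}(v,v)-\widetilde{a}_\mathfrak{k}(v,v)/m_\mathfrak{k}(v,v)$ is purely algebraic: factoring out $\widetilde{a}_\mathfrak{k}(v,v)/\widetilde{m}_\mathfrak{k}(v,v)$ and using $m_\mathfrak{k}(v,v)=1$ turns it into $\frac{\widetilde{a}_\mathfrak{k}(v,v)}{\widetilde{m}_\mathfrak{k}(v,v)}\bigl(m_\mathfrak{k}(v,v)-\widetilde{m}_\mathfrak{k}(v,v)\bigr)$, the fourth term, again as an identity. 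Summing the three pieces yields the assertion, the single inequality entering only through the middle piece. I expect that middle piece to be the one delicate step: one must organize the corrector expansion so that exactly the $(1-\mathscr{C})v$ contributions are killed by orthogonality and the leftover square appears with the correct sign to be discarded, whereas the innermost and outermost pieces are bookkeeping of the telescoping split and the normalization $m_\mathfrak{k}(v,v)=1$.
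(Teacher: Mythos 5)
Your proposal is correct and follows essentially the same route as the paper: the same Rayleigh-quotient identity reducing $a(v^\mathrm{ms}_{H,\mathfrak{k}},v^\mathrm{ms}_{H,\mathfrak{k}})-\lambda$ to the first two terms, the same $a$-orthogonality argument isolating $a((\mathscr{C}-\mathscr{C}_\mathfrak{k})v_{H,\mathfrak{k}},\mathscr{C}_\mathfrak{k}v_{H,\mathfrak{k}})$, and the same algebraic use of $m_\mathfrak{k}(v_{H,\mathfrak{k}},v_{H,\mathfrak{k}})=1$ for the mass-matrix term. The only (cosmetic, arguably cleaner) deviation is in the corrector term, where you split $\mathscr{C}_\mathfrak{k}v_{H,\mathfrak{k}}=\mathscr{C}v_{H,\mathfrak{k}}-(\mathscr{C}-\mathscr{C}_\mathfrak{k})v_{H,\mathfrak{k}}$ and discard a nonpositive square, so that only the energy stability of the \emph{ideal} corrector (an orthogonal projection, hence with constant one) is needed, whereas the paper invokes stability of the localized corrector $\mathscr{C}_\mathfrak{k}$ directly.
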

\begin{proof}
Observe that
\begin{equation}
\label{eqnn: 7.76}
    \norm{u-v^\mathrm{ms}_{H,\mathfrak{k}}}^2_{\mathcal{A}}-\lambda\norm{u-v^\mathrm{ms}_{H,\mathfrak{k}}}^2_{L^2(\Omega)}=\norm{u}^2_{\mathcal{A}}-2a(u,v^\mathrm{ms}_{H,\mathfrak{k}})+\norm{v^\mathrm{ms}_{H,\mathfrak{k}}}^2_{\mathcal{A}}-\lambda\norm{u}_{L^2(\Omega)}^{2}+2\lambda m(u,v^\mathrm{ms}_{H,\mathfrak{k}})-\lambda \norm{v^\mathrm{ms}_{H,\mathfrak{k}}}^2_{L^2(\Omega)}.
\end{equation}
Since $a(u,v^\mathrm{ms}_{H,\mathfrak{k}})=\lambda m(u,v^\mathrm{ms}_{H,\mathfrak{k}})$ holds by \eqref{eqnn: 2.7} and we have that $\norm{u}^2_{\mathcal{A}}=a(u,u)=\lambda m(u,u)= \lambda \norm{u}^2_{L^2(\Omega)}$, we get by the normalization $\norm{v^\mathrm{ms}_{H,\mathfrak{k}}}_{L^2(\Omega)}=1$, and \eqref{eqnn: 7.76} that
\begin{equation}
\label{eqn: 777}
   \norm{u-v^\mathrm{ms}_{H,\mathfrak{k}}}^2_{\mathcal{A}}-\lambda\norm{u-v^\mathrm{ms}_{H,\mathfrak{k}}}^2_{L^2(\Omega)} = a(v^\mathrm{ms}_{H,\mathfrak{k}},v^\mathrm{ms}_{H,\mathfrak{k}})-\lambda.
\end{equation}

\noindent Then we deduce that
\begin{equation*}
    \begin{split}
a\left(v^\mathrm{ms}_{H,\mathfrak{k}},v^\mathrm{ms}_{H,\mathfrak{k}}\right) & = a\left(v^\mathrm{ms}_{H,\mathfrak{k}},(1-\mathscr{C}_\mathfrak{k})v_{H,\mathfrak{k}}\right)\\
       & = a\left(v^\mathrm{ms}_{H,\mathfrak{k}},v_{H,\mathfrak{k}}\right) -  a\left((1-\mathscr{C}_\mathfrak{k})v_{H,\mathfrak{k}} ,\mathscr{C}_\mathfrak{k}v_{H,\mathfrak{k}} \right)\\
        & = a\left(v^\mathrm{ms}_{H,\mathfrak{k}},v_{H,\mathfrak{k}} \right) -  a\left(v_{H,\mathfrak{k}}-\mathscr{C}v_{H,\mathfrak{k}} ,\mathscr{C}_\mathfrak{k}v_{H,\mathfrak{k}}\right) - a\left((\mathscr{C}-\mathscr{C}_\mathfrak{k})v_{H,\mathfrak{k}} ,\mathscr{C}_\mathfrak{k}v_{H,\mathfrak{k}}\right).
    \end{split}
\end{equation*}

\noindent Due to the $a$-orthogonality of $\mathscr{C}$, the middle term vanishes. Then the stability of $\mathscr{C}_\mathfrak{k}$ in the energy norm for $v_{H,\mathfrak{k}} \in V_H$ yields
\begin{equation}
    \begin{split}
    \label{EQNN:784}
a\left(v^\mathrm{ms}_{H,\mathfrak{k}},v_{H,\mathfrak{k}}\right) 
      & \leq  a\left(v^\mathrm{ms}_{H,\mathfrak{k}},v^\mathrm{ms}_{H,\mathfrak{k}}\right) +  \norm{\left(\mathscr{C}-\mathscr{C}_\mathfrak{k}\right)v_{H,\mathfrak{k}}}_{\mathcal{A}}\norm{v_{H,\mathfrak{k}}}_{\mathcal{A}}.
\end{split}
\end{equation}

\noindent Together with \eqref{eqn: 777}, we arrive at
\begin{equation*}
    \begin{split}
    \label{eq:7.86}
a\left(v^\mathrm{ms}_{H,\mathfrak{k}},v_{H,\mathfrak{k}} \right)-\lambda \leq \norm{u-v^\mathrm{ms}_{H,\mathfrak{k}}}^2_{\mathcal{A}}-\lambda\norm{u-v^\mathrm{ms}_{H,\mathfrak{k}}}^2_{L^2(\Omega)} + \norm{\left(\mathscr{C}-\mathscr{C}_\mathfrak{k}\right)v_{H,\mathfrak{k}}}_{\mathcal{A}}\norm{v_{H,\mathfrak{k}}}_{\mathcal{A}}.
    \end{split}
\end{equation*}

\noindent Recall that $\widetilde{a}_\mathfrak{k}(\cdot,\cdot) = a\left((\cdot)^\mathrm{ms}_\mathfrak{k}, (\cdot)\right)$ and $\widetilde{m}_\mathfrak{k}(\cdot, \cdot)=m((\cdot), (\cdot))=\norm{(\cdot)}^2_{L^2(\Omega)}$. Then, together with the above inequality, we obtain that
\begin{equation*}
    \begin{split}
          \frac{\widetilde{a}_\mathfrak{k}(v_{H,\mathfrak{k}},v_{H,\mathfrak{k}})}{\widetilde{m}_\mathfrak{k}(v_{H,\mathfrak{k}},v_{H,\mathfrak{k}})} - \lambda &= [ \widetilde{a}_\mathfrak{k}(v_{H,\mathfrak{k}},v_{H,\mathfrak{k}}) -\lambda]+\left[    \frac{\widetilde{a}_\mathfrak{k}(v_{H,\mathfrak{k}},v_{H,\mathfrak{k}})}{\widetilde{m}_\mathfrak{k}(v_{H,\mathfrak{k}},v_{H,\mathfrak{k}})}-\widetilde{a}_\mathfrak{k}(v_{H,\mathfrak{k}},v_{H,\mathfrak{k}})\right]\\
         & = [ \widetilde{a}_\mathfrak{k}(v_{H,\mathfrak{k}},v_{H,\mathfrak{k}}) -\lambda]+\left[    \frac{\widetilde{a}_\mathfrak{k}(v_{H,\mathfrak{k}},v_{H,\mathfrak{k}})}{\widetilde{m}_\mathfrak{k}(v_{H,\mathfrak{k}},v_{H,\mathfrak{k}})}\norm{v^\mathrm{ms}_{H,\mathfrak{k}}}^2_{L^2(\Omega)}-   \frac{\widetilde{a}_\mathfrak{k}(v_{H,\mathfrak{k}},v_{H,\mathfrak{k}})}{\widetilde{m}_\mathfrak{k}(v_{H,\mathfrak{k}},v_{H,\mathfrak{k}})}\widetilde{m}_\mathfrak{k}(v_{H,\mathfrak{k}},v_{H,\mathfrak{k}})\right]\\
         &= [ \widetilde{a}_\mathfrak{k}(v_{H,\mathfrak{k}},v_{H,\mathfrak{k}}) -\lambda]+ \frac{\widetilde{a}_\mathfrak{k}(v_{H,\mathfrak{k}},v_{H,\mathfrak{k}})}{\widetilde{m}_\mathfrak{k}(v_{H,\mathfrak{k}},v_{H,\mathfrak{k}})}\big[ {m}_\mathfrak{k}(v_{H,\mathfrak{k}},v_{H,\mathfrak{k}})-\widetilde{m}_\mathfrak{k}(v_{H,\mathfrak{k}},v_{H,\mathfrak{k}})\big]\\
           &= a(v^\mathrm{ms}_{H,\mathfrak{k}},v_{H,\mathfrak{k}} ) -\lambda+ \frac{\widetilde{a}_\mathfrak{k}(v_{H,\mathfrak{k}},v_{H,\mathfrak{k}})}{\widetilde{m}_\mathfrak{k}(v_{H,\mathfrak{k}},v_{H,\mathfrak{k}})}\big[ {m}_\mathfrak{k}(v_{H,\mathfrak{k}},v_{H,\mathfrak{k}})-\widetilde{m}_\mathfrak{k}(v_{H,\mathfrak{k}},v_{H,\mathfrak{k}})\big]\\
         &\leq \norm{u-v^\mathrm{ms}_{H,\mathfrak{k}}}^2_{\mathcal{A}}-\lambda\norm{u-v^\mathrm{ms}_{H,\mathfrak{k}}}^2_{L^2(\Omega)} + \norm{(\mathscr{C}-\mathscr{C}_\mathfrak{k})v_{H,\mathfrak{k}}}_{\mathcal{A}}\norm{v_{H,\mathfrak{k}}}_{\mathcal{A}}\\
         & \hspace{2cm}+ \frac{\widetilde{a}_\mathfrak{k}(v_{H,\mathfrak{k}},v_{H,\mathfrak{k}})}{\widetilde{m}_\mathfrak{k}(v_{H,\mathfrak{k}},v_{H,\mathfrak{k}})}\left[{m}_\mathfrak{k}(v_{H,\mathfrak{k}},v_{H,\mathfrak{k}})-\widetilde{m}_\mathfrak{k}(v_{H,\mathfrak{k}},v_{H,\mathfrak{k}})\right].\qedhere
    \end{split}
\end{equation*}
\end{proof}

\subsection{Main results for the MLOD method}
\label{sec: 4.2}
In this section, we present the main estimates for MLOD eigenfunctions and eigenvalues.  

\begin{theorem}[Eigenfunction estimates]
\label{EQT:765}
Let $\lambda^{(l)}$ be the $l$-th eigenvalue of the continuous problem \eqref{eqnn: 2.7}  with multiplicity $r$, i.e., $\lambda^{(l)}=\lambda^{(l+1)}= \cdots = \lambda^{(l+r-1)}$, and let the corresponding eigenspace $E^{(l+j)} \subset \mathcal{H}^1_\#(\Omega),$ for ${j=0,1, \cdots, r-1}$ be spanned by the $r$-many orthonormal basis functions. Let $\left\{\tilde{u}_{H,\mathfrak{k}}^{(l+j),\mathrm{ms}} \right\}_{j=0}^{r-1} \subset V^{\mathrm{ms}}_{H,\mathfrak{k}}$  be an orthonormal set of multiscale eigenfunctions approximating the MLOD problem in \eqref{eqnn: 5.399} with $\norm{\tilde{u}_{H,\mathfrak{k}}^{(l+j),\mathrm{ms}}}_{L^2(\Omega)}=1\; \text{ for every } j$. Assume $l+r-1 \leq N$, where $N=\mathrm{dim}(V_H)$. Then, for each  $j= 0, 1, \cdots, r-1$, there exists a function $\Tilde{u}^{(l+j)}\in E^{(l+j)}$ with $\norm{\Tilde{u}^{(l+j)}}_{L^2(\Omega)}=1$ such that
\begin{equation*}
\norm{\Tilde{u}^{(l+j)}-\tilde{u}_{H,\mathfrak{k}}^{(l+j),\mathrm{ms}}}_{\mathcal{A}} \lesssim H^2 + g(\mathfrak{k}).
\end{equation*}
The hidden constant depends only on the spectral bounds, domain $\Omega$, multiplicity $r$ and the corresponding continuous eigenvalue $\lambda^{(l)}$.
\end{theorem}

We emphasize that approximation order $H^2$ is the same as for the Galerkin LOD method (with or without localization) presented in \cite{MR3343928}, cf. \eqref{est:LOD}. Note that the localization parameter $\mathfrak k$ can be chosen sufficiently large to obtain an overall convergence order $H^2$.

We further note that the choice of $\Tilde{u}^{(l+j)} \in E^{(l+j)}$ such that $\norm{\Tilde{u}^{(l+j)}-\tilde{u}_{H,\mathfrak{k}}^{(l+j),\mathrm{ms}}}_{\mathcal{A}} \to 0$ as $H \to 0$ and $\mathfrak{k} \to \infty$ is made a posteriori, i.e., for each computed eigenfunction, a corresponding function in the exact eigenspace is chosen to minimize the energy norm error. Such matching is guaranteed by classical results for perturbations of compact operators as discussed in \cite{MR716134,MR1115240}. This standard approach allows us to make individual per-function estimates even in the presence of eigenvalue multiplicities, since the continuous and discrete eigenspaces converge as subspaces under mesh refinement and localization.

\begin{proof}
For any $j=0,1,\cdots, r-1$, with $\Tilde{u}^{(l+j)}\in E^{(l+j)}$ we can write
\begin{equation*}
     \norm{\Tilde{u}^{(l+j)}-\tilde{u}_{H,\mathfrak{k}}^{(l+j),\mathrm{ms}}}_{\mathcal{A}} 
         \leq  \norm{\Tilde{u}^{(l+j)}-{u}_{H,\mathfrak{k}}^{(l+j),\mathrm{ms}}}_{\mathcal{A}} + \norm{{u}_{H,\mathfrak{k}}^{(l+j),\mathrm{ms}}-\tilde{u}_{H,\mathfrak{k}}^{(l+j),\mathrm{ms}}}_{\mathcal{A}}
\end{equation*}
with ${u}_{H,\mathfrak{k}}^{(l+j)} \in V^{\text{ms}}_H $ being the approximate solution to the LOD problem in \eqref{eqn: LOD}. Since $V^{\text{ms}}_{H,\mathfrak{k}} \in \mathcal{H}^1_\#$ and $\Tilde{u}^{(l+j)}\in \mathcal{H}^1_\#$, all terms can be interpreted within the same space. We can then proceed as follows using the triangle inequality
\begin{equation*}
    \begin{split}
    \label{eqnn563}
        \norm{\Tilde{u}^{(l+j)}-\tilde{u}_{H,\mathfrak{k}}^{(l+j),\mathrm{ms}}}_{\mathcal{A}} 
         & \leq  \norm{\Tilde{u}^{(l+j)}-{u}_{H,\mathfrak{k}}^{(l+j),\mathrm{ms}}}_{\mathcal{A}} + \norm{{u}_{H,\mathfrak{k}}^{(l+j)}-\tilde{u}_{H,\mathfrak{k}}^{(l+j)}}_{\mathcal{A}}+\norm{ \mathscr{C}_\mathfrak{k}\left({u}_{H,\mathfrak{k}}^{(l+j)}-\tilde{u}_{H,\mathfrak{k}}^{(l+j)}\right) }_{\mathcal{A}}\\
         & \leq \norm{\Tilde{u}^{(l+j)}-{u}_{H,\mathfrak{k}}^{(l+j),\mathrm{ms}}}_{\mathcal{A}} +2\norm{{u}_{H,\mathfrak{k}}^{(l+j)}-\tilde{u}_{H,\mathfrak{k}}^{(l+j)}}_{\mathcal{A}}+ \norm{(\mathscr{C} - \mathscr{C}_\mathfrak{k})\left({u}_{H,\mathfrak{k}}^{(l+j)}-\tilde{u}_{H,\mathfrak{k}}^{(l+j)}\right)}_{\mathcal{A}}\\
         & \lesssim  H^2 + \left(g(\mathfrak{k})\right)^{1/2}H + \left(2+g(\mathfrak{k})\right)\norm{{u}_{H,\mathfrak{k}}^{(l+j)}-\tilde{u}_{H,\mathfrak{k}}^{(l+j)}}_{\mathcal{A}},
    \end{split}
\end{equation*}
where the second last step follows from the stability of $\mathscr{C}$ in the energy norm. In the last step we used the estimates from \eqref{est:LOD} for the $l$-th eigenvalue with multiplicity $r$. These estimates for $r>1$ are justified by the analysis in \cite{MR3343928}. Furthermore, the function $\Tilde{u}^{(l+j)}$ is chosen to minimize the energy norm error with respect to the MLOD eigenfunction $\Tilde{u}^{(l+j),\text{ms}}_{H,\mathfrak{k}}$. Therefore, it is not necessarily the best matching function for the LOD approximation ${u}^{(l+j),\text{ms}}_{H,\mathfrak{k}}$. However, since all approximate eigenspaces converge to $E^{(l+j)}$ in norm, the estimates used in \eqref{est:LOD} are valid for estimating the first term in the last step above, and the convergence rate is preserved up to a constant. We have further used Theorem \ref{thm:5.2.4} to approximate $ \norm{(\mathscr{C} - \mathscr{C}_\mathfrak{k})\left({u}_{H,\mathfrak{k}}^{(l+j)}-\tilde{u}_{H,\mathfrak{k}}^{(l+j)}\right)}_{\mathcal{A}}$ to arrive at the last step. What is left to bound is $\norm{{u}_{H,\mathfrak{k}}^{(l+j)}-\tilde{u}_{H,\mathfrak{k}}^{(l+j)}}_{\mathcal{A}}$, which we can estimate by using compact operators. Suppose in Lemma \ref{L: 6.2.1}, $M1$ is given by the LOD solving \eqref{eqn: LOD} and $M2$ is given by the MLOD solving \eqref{eqnn: 5.399}. Then, with $a_1={a}_\mathfrak{k}$, $a_2 = \widetilde{a}_\mathfrak{k}$, $m_1={m}_\mathfrak{k}$, $m_2=\widetilde{m}_\mathfrak{k}$, $T^1_H={T}^\mathfrak{k}_H$ and $T^2_H=\widetilde{T}^\mathfrak{k}_H$, for the $(l+j)$-th eigenfunction it yields
\begin{equation*}
\begin{split}
    \norm{{T}^\mathfrak{k}_Hf^{(l+j)}-\widetilde{T}^\mathfrak{k}_{H}f^{(l+j)}}^2_{\mathcal{A}}   &\lesssim    ({m}_\mathfrak{k}-\widetilde{m}_\mathfrak{k})(f^{(l+j)},{T}^\mathfrak{k}_Hf^{(l+j)}-\widetilde{T}^\mathfrak{k}_Hf^{(l+j)})\\
        &\hspace{2.1cm} +(\widetilde{a}_\mathfrak{k}-{a}_\mathfrak{k})(\widetilde{T}^\mathfrak{k}_Hf^{(l+j)}, {T}^\mathfrak{k}_Hf^{(l+j)}-\widetilde{T}^\mathfrak{k}_Hf^{(l+j)}).
\end{split}
\end{equation*}
Since $\widetilde{T}^\mathfrak{k}_Hf^{(l+j)}$ solves for \eqref{eqn: LOD}, by Lemma \ref{Lem: 7.75} and Lemma \ref{Lem: 764}, we obtain that
\begin{equation*}
\begin{split}
 \norm{{T}^\mathfrak{k}_Hf^{(l+j)}-\widetilde{T}^\mathfrak{k}_{H}f^{(l+j)}}_{\mathcal{A}}  & \lesssim  \norm{f^{(l+j)}}_\mathcal{A}H^2  + \norm{\widetilde{T}^\mathfrak{k}_Hf^{(l+j)}}_{\mathcal{A}} g(\mathfrak{k}).
\end{split}
\end{equation*}

Since ${T}^\mathfrak{k}_Hf^{(l+j)}$, $\widetilde{T}^\mathfrak{k}_Hf^{(l+j)} \in V_H$, there exist some ${v}_0$, $\tilde{v}_0 \in V_H$ such that 
  ${T}^\mathfrak{k}_Hf^{(l+j)}={v}_0$ and $\widetilde{T}^\mathfrak{k}_Hf^{(l+j)}=\tilde{v}_0$ solving \eqref{E: 5.68} and \eqref{E: 5.69} for LOD and MLOD respectively. On the other hand, \eqref{eqn: LOD} and \eqref{eqnn: 5.399} solve for eigensolutions on $V_H$ for the LOD and MLOD respectively. Moreover, the pair \eqref{E: 5.68} and \eqref{eqn: LOD}  (similarly \eqref{E: 5.69} and \eqref{eqnn: 5.399}) produce the same eigenfunctions and reciprocal eigenvalues to the LOD and MLOD problems. Therefore, for $f^{(l+j)} \in E^{(l+j)}$ with   $\norm{f^{(l+j)}}_{L^2(\Omega)}=1$, the analysis of compact operators for approximating eigenfunctions presented in \cite{MR1035176} yields that
\begin{equation}
\begin{split}
\norm{{u}^{(l+j)}_{H,\mathfrak{k}}-\tilde{u}^{(l+j)}_{H,\mathfrak{k}}}_\mathcal{A}  &\lesssim \left(\lambda^{(l+j)}\right)^{1/2}H^2 + \norm{\tilde{u}^{(l+j)}_{H,\mathfrak{k}}}_{\mathcal{A}}g(\mathfrak{k}).
\label{eqnn:774}
\end{split}
\end{equation}

\noindent We are left to bound $\norm{\tilde{u}^{(l+j)}_{H,\mathfrak{k}}}_{\mathcal{A}}$.
For $\mathfrak{k}$ large enough, by the coercivity of $\widetilde{a}(\cdot, \cdot)$ shown in \cite{MR3422449}, for any $v\in \mathcal{H}^1_\#(\Omega)$
\begin{equation*}
    \norm{v}_{\mathcal{A}}^2 \lesssim  \widetilde{a}_\mathfrak{k}(v,v).
\end{equation*}
If we set $v = \Tilde{u}_{H,\mathfrak{k}}^{(l+j)} $, we get that
\begin{equation}
\begin{split}
 \label{eq: 7.74}  \norm{\Tilde{u}_{H,\mathfrak{k}}^{(l+j)}}_{\mathcal{A}}^2 \lesssim \;  \abs{\widetilde{a}_\mathfrak{k}\left(\Tilde{u}_{H,\mathfrak{k}}^{(l+j)},\Tilde{u}_{H,\mathfrak{k}}^{(l+j)}\right)} &=  \; \abs{\tilde{\lambda}_{H,\mathfrak{k}}^{(l+j)}\widetilde{m}_\mathfrak{k}\left(\Tilde{u}_{H,\mathfrak{k}}^{(l+j)},\Tilde{u}_{H,\mathfrak{k}}^{(l+j)}\right)} \\ & \leq  \widetilde{C}_{\lambda} \abs{\lambda^{(l+j)}}\abs{m\left(\Tilde{u}_{H,\mathfrak{k}}^{(l+j)},\Tilde{u}_{H,\mathfrak{k}}^{(l+j)}\right)}=\widetilde{C}_{\lambda}\abs{ \lambda^{(l+j)}}\norm{\Tilde{u}_{H,\mathfrak{k}}^{(l+j)}}^2_{L^2(\Omega)}.
\end{split}
\end{equation}
The third step follows from the bound $\abs{\tilde{\lambda}^{(l+j)}_{H,\mathfrak k}} \leq c \abs{\lambda^{(l+j)}},$ for some $c\in \mathbb{R}$ by Babu\v{s}ka-Osborn theory. This is justified by Lemma \ref{L: 6.2.1} and the known analysis of the source problem (see above), which in particular yield the convergence $\tilde{\lambda}^{(l+j)}_{H,\mathfrak k}$ to $\lambda^{(l+j)}$ as $H \to 0$ and $\mathfrak{k} \to \infty$.
Since $ \norm{\Tilde{u}_{H,\mathfrak{k}}^{(l+j)}}_{L^2(\Omega)} = \norm{I_H\tilde{u}_{H,\mathfrak{k}}^{(l+j),\text{ms}}}_{L^2(\Omega)} \leq \widetilde{C}\norm{\tilde{u}_{H,\mathfrak{k}}^{(l+j),\text{ms}}}_{L^2(\Omega)}=\widetilde{C}$ by definition, $\norm{\Tilde{u}_{H,\mathfrak{k}}^{(l+j)}}_{\mathcal{A}} $ is clearly bounded above by a positive constant.
Therefore, with an underlying constant that depends on the continuous eigenvalue $\lambda^{(l)}$, it holds that
\begin{equation*}
\begin{split}
     \norm{\Tilde{u}^{(l+j)}-\tilde{u}_{H,\mathfrak{k}}^{(l+j),\rm{ms}}}_{\mathcal{A}} &\lesssim 2H^2 +2g(\mathfrak{k}) + H^2g(\mathfrak{k})+ g^2(\mathfrak{k})  + \left(g(\mathfrak{k})\right)^{1/2}H + H^2.
\end{split}
\end{equation*}
The result easily follows by only considering the dominating terms.
 \end{proof}
 
We now turn to the eigenvalue estimate. Typically, one expects the square of the eigenfunction error, but here the perturbation terms need to be considered as well. Consequently, we obtain a convergence order of $H^2$ also for the eigenvalues and stress that this reduced order in comparison to \cite{MR3343928} stems from the use of the FE mass matrix, cf. Lemma \ref{Lem: 7.75}.

\begin{theorem}[Eigenvalue estimates] Let $\lambda^{(l)}$ be the $l$-th eigenvalue of the continuous problem \eqref{eqnn: 2.7} with multiplicity $r$, i.e., $\lambda^{(l)}=\lambda^{(l+1)}= \cdots = \lambda^{(l+r-1)}$. Let $\tilde{\lambda}^{(l+j)}_{H,\mathfrak{k}}$ denote the $(l+j)$-th eigenvalue of the discrete multiscale problem \eqref{eqnn: 5.399} for $j=0,1 \cdots, r-1$, and assume that $\norm{\tilde{u}_{H,\mathfrak{k}}^{(l+j),\mathrm{ms}}}_{L^2(\Omega)}=1$ for every $j$. Then, for each $j = 0, \cdots, r-1$
\begin{equation*}
\begin{split}
\abs{\frac{\tilde{\lambda}^{(l+j)}_{H,\mathfrak{k}}-\lambda^{(l+j)}}{\lambda^{(l+j)}} }   \lesssim g(\mathfrak{k}) + H^2.
    \end{split}
\end{equation*}
\end{theorem}

\begin{proof}
Note that $ \frac{\widetilde{a}_\mathfrak{k}\left(\tilde{u}^{(l+j)}_{H,\mathfrak{k}},\tilde{u}^{(l+j)}_{H,\mathfrak{k}}\right)}{\widetilde{m}_\mathfrak{k}\left(\tilde{u}^{(l+j)}_{H,\mathfrak{k}},\tilde{u}^{(l+j)}_{H,\mathfrak{k}}\right)} =\tilde{\lambda}^{(l+j)}_{H,\mathfrak{k}} $. Also remind that $\lambda^{(l+j)}$ of \eqref{eqnn: 2.7} is positive. We use Lemma \ref{Lem: 7.4.4} with $v_{H,\mathfrak{k}} = \tilde{u}_{H,\mathfrak{k}}^{(l+j)}$ to obtain that
\begin{equation*}
    \begin{split}
       \tilde{\lambda}^{(l+j)}_{H,\mathfrak{k}}-\lambda^{(l+j)} &= \frac{\widetilde{a}_\mathfrak{k}\left(\tilde{u}^{(l+j)}_{H,\mathfrak{k}},\tilde{u}^{(l+j)}_{H,\mathfrak{k}}\right)}{\widetilde{m}_\mathfrak{k}\left(\tilde{u}^{(l+j)}_{H,\mathfrak{k}},\tilde{u}^{(l+j)}_{H,\mathfrak{k}}\right)}-\lambda^{(l+j)}\\
      & \leq  \norm{u^{(l+j)}-\tilde{u}^{(l+j),\mathrm{ms}}_{H,\mathfrak{k}}}^2_{\mathcal{A}}-\lambda^{(l+j)} \norm{u^{(l+j)}-\tilde{u}^{(l+j),\mathrm{ms}}_{H,\mathfrak{k}}}^2_{L^2(\Omega)} +\norm{\left(\mathscr{C}-\mathscr{C}_\mathfrak{k}\right)\tilde{u}^{(l+j)}_{H,\mathfrak{k}}}_{\mathcal{A}}\norm{\tilde{u}^{(l+j)}_{H,\mathfrak{k}}}_{\mathcal{A}} \\
      & \hspace{4cm} + \tilde{\lambda}^{(l+j)}_{H,\mathfrak{k}}\left({m}_\mathfrak{k}\left(\tilde{u}^{(l+j)}_{H,\mathfrak{k}},\tilde{u}^{(l+j)}_{H,\mathfrak{k}}\right)-\widetilde{m}_\mathfrak{k}\left(\tilde{u}^{(l+j)}_{H,\mathfrak{k}},\tilde{u}^{(l+j)}_{H,\mathfrak{k}}\right)\right).
  \end{split}
\end{equation*}

Since  $\abs{\tilde{\lambda}^{(l+j)}_{H,\mathfrak{k}}}$ can be bounded above by the absolute continuous eigenvalue, by Theorem \ref{EQT:765}, Theorem \ref{thm:5.2.4} and Lemma \ref{Lem: 7.75}, we derive that
           \begin{equation}
            \begin{split} 
            \label{eqn: eig_MLOD}
   \abs{\frac{\tilde{\lambda}^{(l+j)}_{H,\mathfrak{k}}-\lambda^{(l+j)}}{\lambda^{(l+j)}} }   & \lesssim \left(H^2 + g(\mathfrak{k})\right)^2 +  g(\mathfrak{k})\norm{\tilde{u}^{(l+j)}_{H,\mathfrak{k}}}_{\mathcal{A}}^2 + H^2.
    \end{split}
\end{equation}
The assertion follows since $\norm{\tilde{u}^{(l+j)}_{H,\mathfrak{k}}}_{\mathcal{A}}^2$ can be bounded above as previously shown in \eqref{eq: 7.74}. 
\end{proof}

\begin{remark}
\noindent Given that $g(\mathfrak{k})\approx H^2$, if we compare equation \eqref{eqn: eig_MLOD} with the eigenvalue results form \eqref{est:LOD}, we can clearly notice that the order of the first term in \eqref{eqn: eig_MLOD} yields the same order as in \eqref{est:LOD} whereas the reduction of order from $H^4$ to $H^2$ follows as a consequence of the error due to the localization corrector term (second term in \eqref{eqn: eig_MLOD}) in the Petrov-Galerkin method regardless of the choice of the mass matrix. Therefore, by choosing the FEM mass matrix instead of a LOD based mass matrix, we obtain an efficient solver at the same convergence rate $H^2$.
\end{remark}

\subsection{OLOD method}
We now present our analytical results for the eigenfunctions and eigenvalues in the OLOD method. As explained in earlier sections, we utilize the MLOD results as an intermediate step between the continuous and OLOD solutions. Recall that moving from the MLOD to the OLOD method, only the stiffness matrix on the left-hand side changes. Analyzing this additional consistency error with the techniques already used for the MLOD method, we show the same error rates now also for the OLOD method.

\begin{theorem}[Eigenfunction estimates]
\label{thm: 7.7}
Suppose the OLOD problem in \eqref{Eq:5.103} is well-posed. 
Let $\lambda^{(l)}$ be the $l$-th eigenvalue of the continuous problem \eqref{eqnn: 2.7} with multiplicity $r$, i.e., $\lambda^{(l)}=\lambda^{(l+1)}= \cdots = \lambda^{(l+r-1)}$, and let the corresponding eigenspace $E^{(l+j)} \subset \mathcal{H}^1_\#(\Omega)$ for ${j=0,1, \cdots, r-1}$ be spanned by the $r$-many orthonormal basis functions. Let $\left\{\hat{u}_{H,\mathfrak{k}}^{(l+j),\mathrm{ms}} \right\}_{j=0}^{r-1} \subset V^\mathrm{ms}_{H, \mathfrak k}$ be an orthonormal set of multiscale eigenfunctions approximating the OLOD problem in \eqref{Eq:5.103}, where $\hat{u}^{(l+j), \mathrm{ms}}_{H,\mathfrak{k}}= \hat{u}^{(l+j)}_{H,\mathfrak{k}}-\widehat{\mathscr{C}}_\mathfrak{k}\hat{u}^{(l+j)}_H$, $\hat{u}^{(l+j)}_{H,\mathfrak{k}}\in V_H$. Assume $l+r-1 \leq N$, where $N=\mathrm{dim}(V_H)$. Then, for each  $j= 0, 1, \cdots, r-1$, there exists a function $\hat{u}^{(l+j)}\in E^{(l+j)}$ with $\norm{\hat{u}^{(l+j)}}_{L^2(\Omega)}=1$ such that\begin{equation*}
\label{EQ:7.109}
     \norm{\hat{u}^{(l+j)}-\hat{u}^{(l+j), \mathrm{ms}}_{H,\mathfrak{k}}}_\mathcal{A} \lesssim H^2 + g(\mathfrak{k}) +  \left(\max_{T\in \mathcal{T}_H}E_T\right)\mathfrak{k}^{d/2}.
\end{equation*}
The hidden constant depends only on the spectral bounds, domain $\Omega$, multiplicity $r$ and the corresponding continuous eigenvalue $\lambda^{(l)}$.
\end{theorem}
In the following proof, we use similar arguments used in Theorem \ref{EQT:765} for the validity of the traingle inequality and the estimates used for bounding $\norm{\hat{u}^{(l+j)}-\tilde{u}^{(l+j),\mathrm{ms}}_{H, \mathfrak{k}}}_\mathcal{A}$.
\begin{proof}
For $j=0,1,\cdots, r-1$, we have that
\begin{equation*}
    \begin{split}
        \norm{\hat{u}^{(l+j)}-\hat{u}^{(l+j),\mathrm{ms}}_{H,\mathfrak{k}}}_\mathcal{A}
        &\leq \norm{\hat{u}^{(l+j)}-\tilde{u}^{(l+j),\mathrm{ms}}_{H, \mathfrak{k}}}_\mathcal{A} +\norm{\tilde{u}^{(l+j)}_{H,\mathfrak{k}}-{\mathscr{C}_\mathfrak{k}}\tilde{u}^{(l+j)}_{H\mathfrak{k}}-\hat{u}^{(l+j)}_{H,\mathfrak{k}} + \widehat{\mathscr{C}}_\mathfrak{k}\hat{u}^{(l+j)}_{H,\mathfrak{k}}}_\mathcal{A}\\
        &\leq \norm{\hat{u}^{(l+j)}-\tilde{u}^{(l+j),\mathrm{ms}}_{H, \mathfrak{k}}}_\mathcal{A} +\norm{\tilde{u}^{(l+j)}_{H,\mathfrak{k}}-\hat{u}^{(l+j)}_{H,\mathfrak{k}}}_\mathcal{A}\\
        & \hspace{3.05cm} +\norm{\widehat{\mathscr{C}}_\mathfrak{k}\hat{u}^{(l+j)}_{H,\mathfrak{k}}+{\mathscr{C}_\mathfrak{k}}\hat{u}^{(l+j)}_{H,\mathfrak{k}}-{\mathscr{C}_\mathfrak{k}}\hat{u}^{(l+j)}_{H,\mathfrak{k}} - {\mathscr{C}}_\mathfrak{k}\tilde{u}^{(l+j)}_{H,\mathfrak{k}}}_\mathcal{A} \\
         & \leq \norm{\hat{u}^{(l+j)}-\tilde{u}^{(l+j),\mathrm{ms}}_{H, \mathfrak{k}}}_\mathcal{A} +\norm{\tilde{u}^{(l+j)}_{H,\mathfrak{k}}-\hat{u}^{(l+j)}_{H,\mathfrak{k}}}_\mathcal{A} \\
        & \hspace{3.05cm} +\norm{{\mathscr{C}_\mathfrak{k}}\left(\tilde{u}^{(l+j)}_{H,\mathfrak{k}}-\hat{u}^{(l+j)}_{H,\mathfrak{k}}\right)}_\mathcal{A}+\norm{\left(\widehat{\mathscr{C}}_\mathfrak{k}-{\mathscr{C}}_\mathfrak{k}\right)\hat{u}^{(l+j)}_{H,\mathfrak{k}}}_\mathcal{A}\\
       &  \lesssim \norm{\hat{u}^{(l+j)}- \tilde{u}^{(l+j),\mathrm{ms}}_{H, \mathfrak{k}}}_\mathcal{A} +2\norm{\tilde{u}^{(l+j)}_{H,\mathfrak{k}}-\hat{u}^{(l+j)}_{H,\mathfrak{k}}}_\mathcal{A}+\norm{\left(\widehat{\mathscr{C}}_\mathfrak{k}-{\mathscr{C}}_\mathfrak{k}\right)\hat{u}^{(l+j)}_{H,\mathfrak{k}}}_\mathcal{A}.
    \end{split}
\end{equation*}

\noindent We already have estimates for all the terms above except the second. This can be easily found by applying Lemma
\ref{L: 6.2.1} with $M1$ given by the OLOD and $M2$ by the MLOD solving \eqref{Eq:5.103} and \eqref{eqnn: 5.399} respectively. Then, with $a_1 = \widehat{a}_\mathfrak{k}, a_2 =\widetilde{a}_\mathfrak{k}, m_1 = \widehat{m}_\mathfrak{k} = \widetilde{m}_\mathfrak{k} = m_2, T^1_H = \widehat{T}^\mathfrak{k}_H$ and $T^2_H = \widetilde{T}^\mathfrak{k}_H$ together with \eqref{EQN: 5.108} yields
\begin{equation*}
    \begin{split}
         \norm{\widetilde{T}^{\mathfrak{k}}_Hf^{(l+j)}-\widehat{T}^{\mathfrak{k}}_Hf^{(l+j)}}_{\mathcal{A}}\lesssim  &  \; \left(\max\limits_{T\in \mathcal{T}_H}E_T\right)\mathfrak{k}^{d/2} \norm{\widehat{T}^\mathfrak{k}_Hf^{(l+j)}}_\mathcal{A}.
    \end{split}
\end{equation*}
From the coercivity condition in \eqref{equ:7} and bounds similar to \eqref{eq: 7.74}, together with the interpolation property we derive a constant upper bound for $\norm{\hat{u}^{(l+j)}_{H,\mathfrak{k}}}_\mathcal{A}$. Hence, by similar arguments from compact operator theory used to obtain \eqref{eqnn:774} we yield 
\begin{equation}
\begin{split}
\label{eq: 7114}
\norm{\tilde{u}^{(l+j)}_{H,\mathfrak{k}}-\hat{u}^{(l+j)}_{H,\mathfrak{k}}}_\mathcal{A} &\lesssim \left(\max\limits_{T\in \mathcal{T}_H}E_T\right)\mathfrak{k}^{d/2}.
\end{split}
\end{equation}
Theorem \ref{EQT:765}, \eqref{eq:7109} of Theorem \ref{Thm:prelim} and \eqref{eq: 7114} imply that
\begin{equation*}
\begin{split}
     \norm{\hat{u}^{(l+j)}-\hat{u}^{(l+j), \rm{ms}}_{H,\mathfrak{k}}}_\mathcal{A} \lesssim  H^2 + g(\mathfrak{k}) +2\left(\max\limits_{T\in \mathcal{T}_H}E_T \right)\mathfrak{k}^{d/2} + \left(\max_{T\in \mathcal{T}_H}E_T\right)\mathfrak{k}^{d/2}.\qedhere
\end{split}
\end{equation*}
\end{proof}

\begin{theorem}[Eigenvalue estimate]
\label{EigThm: Off}Let $\lambda^{(l)}$ be the $l$-th eigenvalue of the continuous problem \eqref{eqnn: 2.7}  with multiplicity $r$, i.e., $\lambda^{(l)}=\lambda^{(l+1)}= \cdots = \lambda^{(l+r-1)}$. Let $\hat{\lambda}^{(l+j)}_{H,\mathfrak{k}}$ denote the $(l+j)$-th eigenvalue of the discrete multiscale problem given by \eqref{Eq:5.103} for $j = 0, \cdots, r-1$. Consider the same setting as in Theorem \ref{thm: 7.7} where $\norm{\hat{u}^{(l+j),\rm{ms}}_{H,\mathfrak{k}}}_{L^2(\Omega)}=1$ for every $j$. Then, for each $j = 0, \cdots, r-1$
\begin{equation*}
\begin{split}
\abs{\frac{\hat{\lambda}^{(l+j)}_{H,\mathfrak{k}}-\lambda^{(l+j)}}{\lambda^{(l+j)}} } \lesssim \; & H^2+g(\mathfrak{k})+\left(\max\limits_{T\in \mathcal{T}_H}E_T\right)\mathfrak{k}^{d/2} + \left(\max_{T\in \mathcal{T}_H}E_T\right)^2\mathfrak{k}^{d}.
\end{split}
\end{equation*}
\end{theorem}

\begin{proof}
Note that $ \frac{\widehat{a}_\mathfrak{k}\left(\hat{u}^{(l+j)}_{H,\mathfrak{k}},\hat{u}^{(l+j)}_{H,\mathfrak{k}}\right)}{\widehat{m}_\mathfrak{k}\left(\hat{u}^{(l+j)}_{H,\mathfrak{k}},\hat{u}^{(l+j)}_{H,\mathfrak{k}}\right)} =\hat{\lambda}^{(l+j)}_{H,\mathfrak{k}} $. Then, we obtain that
\begin{align}
       \hat{\lambda}^{(l+j)}_{H,\mathfrak{k}}-\lambda^{(l+j)} &= \frac{\widehat{a}_\mathfrak{k}\left(\hat{u}^{(l+j)}_{H,\mathfrak{k}},\hat{u}^{(l+j)}_{H,\mathfrak{k}}\right)}{\widehat{m}_\mathfrak{k}\left(\hat{u}^{(l+j)}_{H,\mathfrak{k}},\hat{u}^{(l+j)}_{H,\mathfrak{k}}\right)}-\lambda^{(l+j)}\NNN
      & \leq \widehat{a}_\mathfrak{k}\left(\hat{u}^{(l+j)}_{H,\mathfrak{k}},\hat{u}^{(l+j)}_{H,\mathfrak{k}}\right) -\lambda^{(l+j)} +\frac{\widehat{a}_\mathfrak{k}\left(\hat{u}^{(l+j)}_{H,\mathfrak{k}},\hat{u}^{(l+j)}_{H,\mathfrak{k}}\right)}{\widehat{m}_\mathfrak{k}\left(\hat{u}^{(l+j)}_{H,\mathfrak{k}},\hat{u}^{(l+j)}_{H,\mathfrak{k}}\right)}- \widehat{a}_\mathfrak{k}\left(\hat{u}^{(l+j)}_{H,\mathfrak{k}},\hat{u}^{(l+j)}_{H,\mathfrak{k}}\right)\NNN 
      & = \left(\widehat{a}_\mathfrak{k}- \widetilde{a}_\mathfrak{k}\right)\left(\hat{u}^{(l+j)}_{H,\mathfrak{k}},\hat{u}^{(l+j)}_{H,\mathfrak{k}}\right) + \left( \widetilde{a}_\mathfrak{k}\left(\hat{u}^{(l+j)}_{H,\mathfrak{k}},\hat{u}^{(l+j)}_{H,\mathfrak{k}}\right) - \lambda^{(l+j)}\right)\NNN
      & \hspace{4.22cm} + \hat{\lambda}^{(l+j)}_{H,\mathfrak{k}}\left( \norm{\hat{u}^{(l+j),\mathrm{ms}}_{H,\mathfrak{k}}}^2_{L^2(\Omega)} - \widehat{m}_\mathfrak{k}\left(\hat{u}^{(l+j)}_{H,\mathfrak{k}},\hat{u}^{(l+j)}_{H,\mathfrak{k}} \right)\right)\NNN
      & \leq \abs{\left(\widehat{a}_\mathfrak{k}- \widetilde{a}_\mathfrak{k} \right)\left(\hat{u}^{(l+j)}_{H,\mathfrak{k}},\hat{u}^{(l+j)}_{H,\mathfrak{k}}\right)} + \abs{\widetilde{a}_\mathfrak{k}\left(\hat{u}^{(l+j)}_{H,\mathfrak{k}},\hat{u}^{(l+j)}_{H,\mathfrak{k}}\right) - \lambda^{(l+j)}}\NNN
      & \hspace{4.45cm} + \abs{\widehat{C}_{{\lambda}}\lambda^{(l+j)}}\,\abs{\norm{\hat{u}^{(l+j),\mathrm{ms}}_{H,\mathfrak{k}}}^2_{L^2(\Omega)} - \widehat{m}_\mathfrak{k}\left(\hat{u}^{(l+j)}_{H,\mathfrak{k}},\hat{u}^{(l+j)}_{H,\mathfrak{k}}\right )}
      \label{EQ:7126}
      \end{align}
\noindent  where the constant $\widehat{C}_{\lambda}$ follows from a similar explanation for $\widetilde{C}_\lambda$ in \eqref{eq: 7.74}. We use Lemma \ref{Lem: 5.1.4}, Theorem \ref{thm:5.2.4}, \eqref{eq:7109} of Theorem \ref{Thm:prelim} and the stability of $\mathscr{C}$ in the energy norm together with the triangle inequality to estimate the last term as

\begin{align}
\label{eqnn: 7.129}
&\!\!\!\!\!\!\!\!\!\!\!\! \norm{\hat{u}^{(l+j),\mathrm{ms}}_{H, \mathfrak{k}}}^2_{L^2(\Omega)} - \widehat{m}\left(\hat{u}^{(l+j)}_{H, \mathfrak{k}},\hat{u}^{(l+j)}_{H, \mathfrak{k}} \right)\NNN
\hspace{1cm}&= m\left(\left(1-\widehat{\mathscr{C}}_\mathfrak{k}\right)\hat{u}^{(l+j)}_{H,\mathfrak{k}}, \left(1-\widehat{\mathscr{C}}_\mathfrak{k}\right)\hat{u}^{(l+j)}_{H,\mathfrak{k}}\right) - m\left(\hat{u}^{(l+j)}_{H,\mathfrak{k}},\hat{u}^{(l+j)}_{H,\mathfrak{k}}\right)\NNN 
& \leq \abs{ m(\widehat{\mathscr{C}}_\mathfrak{k}\hat{u}^{(l+j)}_{H,\mathfrak{k}}, \widehat{\mathscr{C}}_\mathfrak{k}\hat{u}^{(l+j)}_{H,\mathfrak{k}})} + 2\abs{m(\widehat{\mathscr{C}}_\mathfrak{k}\hat{u}^{(l+j)}_{H,\mathfrak{k}}, \hat{u}^{(l+j)}_{H,\mathfrak{k}})}\NNN
&\lesssim \left(\norm{\widehat{\mathscr{C}}_\mathfrak{k}\hat{u}^{(l+j)}_{H,\mathfrak{k}}}^2_\mathcal{A}+2\norm{\widehat{\mathscr{C}}_\mathfrak{k}\hat{u}^{(l+j)}_{H, \mathfrak{k}}}_\mathcal{A}\norm{\hat{u}^{(l+j)}_{H, \mathfrak{k}}}_\mathcal{A}\right)H^2\NNN
&\lesssim \Bigg( \norm{\left(\mathscr{C}-{\mathscr{C}}_\mathfrak{k}\right)\hat{u}^{(l+j)}_{H,\mathfrak{k}}}^2_\mathcal{A}+\norm{\left(\mathscr{C}_\mathfrak{k}-\widehat{\mathscr{C}}_\mathfrak{k}\right)\hat{u}^{(l+j)}_{H,\mathfrak{k}}}^2_\mathcal{A} +2\norm{\left(\mathscr{C}-{\mathscr{C}}_\mathfrak{k}\right)\hat{u}^{(l+j)}_{H, \mathfrak{k}}}_\mathcal{A}\norm{\hat{u}^{(l+j)}_{H,\mathfrak{k}}}_\mathcal{A}\NNN
\displaybreak
& \hspace{2cm}+2\norm{\left(\mathscr{C}_\mathfrak{k}-\widehat{\mathscr{C}}_\mathfrak{k}\right)\hat{u}^{(l+j)}_{H,\mathfrak{k}}}_\mathcal{A}\norm{\hat{u}^{(l+j)}_{H,\mathfrak{k}}}_\mathcal{A} +3\norm{\hat{u}^{(l+j)}_{H,\mathfrak{k}}}^2_\mathcal{A}\Bigg)H^2\NNN
&\lesssim \norm{\hat{u}^{(l+j)}_{H,\mathfrak{k}}}^2_\mathcal{A}\left(g^2(\mathfrak{k}) + 2g(\mathfrak{k})  +3+ \left(\max_{T\in \mathcal{T}_H}E_T\right)^2 \mathfrak{k}^{d} + 2\left(\max_{T\in \mathcal{T}_H}E_T\right)\mathfrak{k}^{d/2}\right)H^2.
\end{align}

\noindent Next, let us find an estimate for the middle term in \eqref{EQ:7126}.
Recall that by definition, it holds that
\begin{equation*}
\label{EQ:bc}
  { \widetilde{a}_\mathfrak{k}\left(\hat{u}^{(l+j)}_{H,\mathfrak{k}},\hat{u}^{(l+j)}_{H,\mathfrak{k}}\right) - \lambda^{(l+j)}} = { a\left((1-{\mathscr{C}}_\mathfrak{k})\hat{u}^{(l+j)}_{H,\mathfrak{k}},\hat{u}^{(l+j)}_{H,\mathfrak{k}}\right) - \lambda^{(l+j)}}. 
\end{equation*}
Since $\left(1-{\mathscr{C}}_\mathfrak{k}\right)\hat{u}^{(l+j)}_{H,\mathfrak{k}}\in V^{\text{ms}}_{H,\mathfrak{k}}$, by the construction in \eqref{EQNN:784}, we obtain that
\begin{equation*}
    \begin{split}
   a\left(\left(1-{\mathscr{C}}_\mathfrak{k}\right)\hat{u}^{(l+j)}_{H,\mathfrak{k}},\hat{u}^{(l+j)}_{H,\mathfrak{k}}\right) & \leq     \norm{\left(1-\mathscr{C}_\mathfrak{k}\right)\hat{u}^{(l+j)}_{H,\mathfrak{k}}}^2_\mathcal{A}+  \norm{\left(\mathscr{C}-\mathscr{C}_\mathfrak{k}\right)\hat{u}^{(l+j)}_{H,\mathfrak{k}}}_{\mathcal{A}}\norm{\hat{u}^{(l+j)}_{H,\mathfrak{k}}}_{\mathcal{A}}\\
   & \leq \norm{\left(1-\widehat{\mathscr{C}}_\mathfrak{k}\right)\hat{u}^{(l+j)}_{H,\mathfrak{k}}}^2_\mathcal{A}+ \norm{\left(\mathscr{C}_\mathfrak{k}-\widehat{\mathscr{C}}_\mathfrak{k}\right)\hat{u}^{(l+j)}_{H,\mathfrak{k}}}^2_\mathcal{A}+  \norm{\left(\mathscr{C}-\mathscr{C}_\mathfrak{k}\right)\hat{u}^{(l+j)}_{H,\mathfrak{k}}}_{\mathcal{A}}\norm{\hat{u}^{(l+j)}_{H,\mathfrak{k}}}_{\mathcal{A}}.
    \end{split}
\end{equation*}

\noindent Noting that $\norm{\left(1-\widehat{\mathscr{C}}_\mathfrak{k}\right)\hat{u}^{(l+j)}_{H,\mathfrak{k}}}_\mathcal{A}^2=a\left(\left(1-\widehat{\mathscr{C}}_\mathfrak{k}\right)\hat{u}^{(l+j)}_{H,\mathfrak{k}},\left(1-\widehat{\mathscr{C}}_\mathfrak{k}\right)\hat{u}^{(l+j)}_{H,\mathfrak{k}}\right)$, by combining with \eqref{eqn: 777}, we arrive at
\begin{align}
     { \widetilde{a}_\mathfrak{k}\left(\hat{u}^{(l+j)}_{H,\mathfrak{k}},\hat{u}^{(l+j)}_{H,\mathfrak{k}}\right) - \lambda^{(l+j)}} &\leq    \norm{u^{(l+j)}-\hat{u}^{(l+j),\text{ms}}_{H,\mathfrak{k}
   }}^2_{\mathcal{A}}-\lambda^{(l)}\norm{u^{(l+j)}-\hat{u}^{(l+j),\text{ms}}_{H,\mathfrak{k}}}^2_{L^2(\Omega)} \NNN
   & \hspace{2cm} + \norm{\left(\mathscr{C}_\mathfrak{k}-\widehat{\mathscr{C}}_\mathfrak{k}\right)\hat{u}^{(l+j)}_{H,\mathfrak{k}}}^2_{\mathcal{A}} +  \norm{\left(\mathscr{C}-{\mathscr{C}}_\mathfrak{k}\right)\hat{u}^{(l+j)}_{H,\mathfrak{k}}}_{\mathcal{A}}\norm{\hat{u}^{(l+j)}_{H, \mathfrak{k}}}_{\mathcal{A}}\NNN
 &\lesssim  \norm{u^{(l+j)}-\hat{u}^{(l+j),\text{ms}}_{H,\mathfrak{k}
   }}^2_{\mathcal{A}} + \norm{\left(\mathscr{C}_\mathfrak{k}-\widehat{\mathscr{C}}_\mathfrak{k}\right)\hat{u}^{(l+j)}_{H,\mathfrak{k}}}^2_{\mathcal{A}} +  \norm{\left(\mathscr{C}-{\mathscr{C}}_\mathfrak{k}\right)\hat{u}^{(l+j)}_{H,\mathfrak{k}}}_{\mathcal{A}}\norm{\hat{u}^{(l+j)}_{H, \mathfrak{k}}}_{\mathcal{A}}.
   \label{EQ:7128}
\end{align}
The above inequality can be easily estimated using Theorems \ref{thm: 7.7}, \ref{thm:5.2.4} and \eqref{eq:7109} of Theorem \ref{Thm:prelim}.

\noindent Using \eqref{EQN: 5.108}, and substituting \eqref{eqnn: 7.129} and \eqref{EQ:7128} in \eqref{EQ:7126}, we find the desired estimate as
 
\begin{equation*}
    \begin{split}
    \left|\frac{\hat{\lambda}^{(l+j)}_{H,\mathfrak{k}}-\lambda^{(l+j)}}{\lambda^{(l+j)}} \right| &\lesssim \left(\max\limits_{T\in \mathcal{T}_H}E_T\right)\mathfrak{k}^{d/2}\norm{\hat{u}^{(l+j)}_{H}}_\mathcal{A}^2 + \left( H^2 + g(\mathfrak{k}) +  \left(\max_{T\in \mathcal{T}_H}E_T\right)\mathfrak{k}^{d/2}\right)^2  +  \left(\max_{T\in \mathcal{T}_H}E_T\right)^2\mathfrak{k}^{d}\norm{\hat{u}^{(l+j)}_{H,\mathfrak{k}}}^2_\mathcal{A}\\  & \hspace{2cm} + g(\mathfrak{k})\norm{\hat{u}^{(l+j)}_{H,\mathfrak{k}}}^2_\mathcal{A} + \left(3 + g(\mathfrak{k}) + \left(\max_{T\in \mathcal{T}_H}E_T\right)\mathfrak{k}^{d/2}\right)H^2\norm{\hat{u}^{(l+j)}_{H,\mathfrak{k}}}^2_\mathcal{A}\\
    & \lesssim \left(H^2+g(\mathfrak{k})+\left(\max\limits_{T\in \mathcal{T}_H}E_T \right)\mathfrak{k}^{d/2}+ \left(\max_{T\in \mathcal{T}_H}E_T\right)^2\mathfrak{k}^{d} \right)\norm{\hat{u}^{(l+j)}_{H, \mathfrak{k}}}_\mathcal{A}^2.
    \end{split}
\end{equation*}
\noindent The result follows, since $\norm{\hat{u}^{(l+j)}_{H, \mathfrak{k}}}_\mathcal{A}^2$, similar to \eqref{eq: 7.74}, can be bounded above by a constant due to $\widehat{a}(\cdot, \cdot)$-coercivity given in \eqref{equ:7}.
\end{proof}

\section{Numerical experiments}
\label{sec:5}
In this section, we illustrate our theoretical developments with several numerical experiments. We focus on the OLOD method and its alternate variant with the $s$-values from Section \ref{subsec:alternate}. For details in the calculation of $E_T$ (if desired for error control) and discussions of the run-time complexity, we refer to \cite[Section 5]{MR4378546}. Note that aspects on the implementation of the LOD (without offline-online strategy) are explained in \cite{MR3926249}. Our code is based on \cite{gridlodcode} and \cite{randompertubation}, and freely available at \cite{dilini_kolombage_2025_14243509}. 

\begin{remark}
In our numerical experiments, we will focus on the approximation of the lowest non-trivial eigenvalue, but emphasize that the theory of Section \ref{sec:4} is also valid for multiple and/or higher eigenvalues.
By our focus on the lowest eigenvalue, we circumvent any possible numerical challenges well-known for high eigenvalues such as spectral pollution \cite{MR1058692, MR2640293}.
Furthermore, it is known that (higher) eigenvalues may ``cross'' in its dependence on the random parameter, leading to difficulties in the correct identification and ``matching'' of corresponding eigenvalues and eigenfuctions \cite{MR1335452}. We emphasize that our method and error estimates are targeted at accurate and efficient approximations for single realizations or samples, and that we do not consider the possibility of eigenvalue crossings in our numerical experiments.
\end{remark}
In our experiments discussed in Sections \ref{sec: 5.1} and \ref{subsec:5.2}, we consider the domain $\Omega =[0,1]^d$ for $d=1,2$. The spectral bounds are set to $\alpha=0.1$ and $\beta=1$. We consider $\mathfrak{k}=3$ for the LOD formulation and use the standard finite element method on a fine mesh with the fixed mesh size  $h=2^{-8}$ as our reference solver. We further set the multiscale parameter $\varepsilon = 2^{-7}$. We measure the root mean square error (RMSE) between the reference eigenvalue and the OLOD eigenvalues against various probabilities $p$ and the coarse mesh size $H$ over $N=200$ samples. To compute eigenvalues, we use the built-in functions in the \textit{scipy.sparse} package, which are based on the ARPACK solver. In all cases we use a tolerance $10^{-4}$.

\begin{remark}
The elliptic eigenvalue problem may produce multiple eigenvalues. For reasons of comparison, we consider an averaged eigenvalue of the two smallest non-trivial eigenvalues in our experiment. 
\end{remark}

\subsection{Experiments for the OLOD method}
\label{sec: 5.1}

\subsubsection{One-dimensional experiments}

In the one-dimensional case, we randomly assign the value $0.1$ to each interval of length $\varepsilon$. Figure \ref{fig:image3} shows the error convergence with respect to $H$ (left) and the defect probability $p$ (right).  In Figure \ref{fig:subim3.2}, we observe a second order convergence, which confirms our theoretical findings in Theorem \ref{EigThm: Off}. For finer $H$, we see that the convergence becomes slower the larger $p$ is. This is most probably due to the consistency error in the offline-online strategy, which is expected to grow with $p$ and to dominate LOD spatial discretization error for sufficiently fine meshes. Similar $H$-convergence rates can also be observed in the two-dimensional experiments (data not shown). Having confirmed the expected $H$-convergence, we completely focus on the error dependence on $p$ in the following and choose rather small values of $H$ to concentrate on the consistency error.

Figure \ref{subim3.1} illustrates the RMSE against probability $p$ for various $H$ sizes. We notice a less than $2\%$ RMSE up to $10\%$ of defect probability. We further observe that on each $H$, the corresponding error is almost constant for probabilities smaller than $5\%$. Thereafter the error is only slightly growing.

\begin{figure}[h]
\begin{subfigure}{0.45\textwidth}
\includegraphics[width=0.9\linewidth, height=5cm]{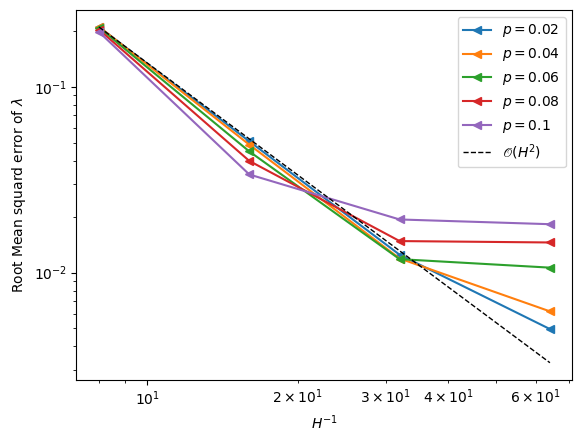}
\caption{$H$-convergence }
\label{fig:subim3.2}
\end{subfigure}
\begin{subfigure}{0.45\textwidth}
\includegraphics[width=0.9\linewidth, height=5cm]{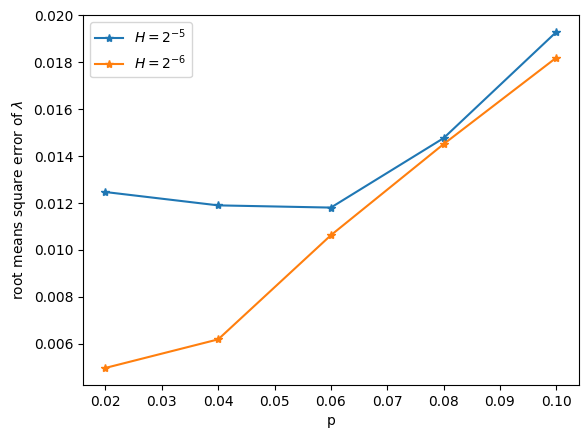} 
\caption{$p$-dependence}
\label{subim3.1}
\end{subfigure}
\caption{The one-dimensional random checkerboard.}
\label{fig:image3}
\end{figure}

\subsubsection{Two-dimensional experiments}
Figure \ref{fig:image4} illustrates two-dimensional results for the random checkerboard (Figure \ref{fig:1.1}) and random erasure (Figure \ref{fig:1.2}) in Section \ref{sec:Intro}.

Again, our OLOD method shows a very promising performance with a RMSE of a few percent for reasonable defect probabilities. For instance, the random checkerboard results in Figure \ref{fig:subim4.1} shows RMSEs less than $12\%$ for $H=2^{-5}$ and a defect probability up to $10\%$. In the random erasure, Figure \ref{fig:subim4.2}, RMSEs are even lower, more precisely for $H=2^{-5}$, we observe less than $2\%$ error for material defects up to $10\%$.
\begin{figure}[h]
\begin{subfigure}{0.45\textwidth}
\includegraphics[width=0.9\linewidth, height=5cm]{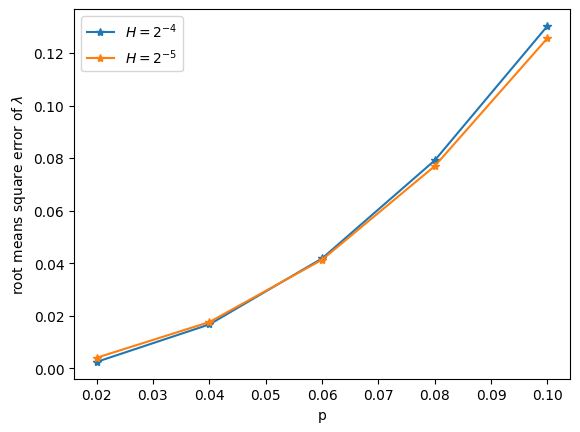}
\caption{Random checkerboard}
\label{fig:subim4.1}
\end{subfigure}
\begin{subfigure}{0.45\textwidth}
\includegraphics[width=0.9\linewidth, height=5cm]{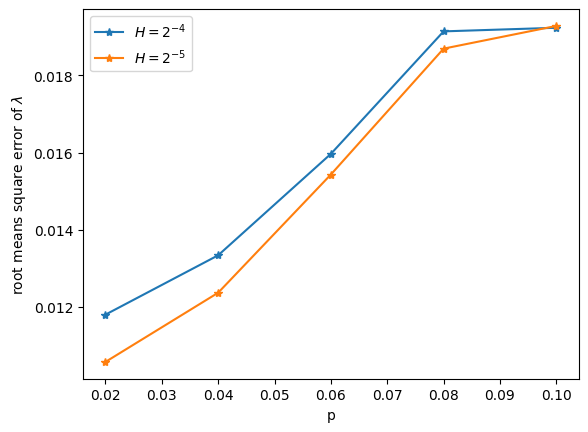}
\caption{Random erasure}
\label{fig:subim4.2}
\end{subfigure}
\caption{The OLOD (sum constraint one) results in two-dimensions.}
\label{fig:image4}
\end{figure}

\subsection{Experiments with the alternate offline-online strategy}
\label{subsec:5.2}
In this section, we compare the accuracy of the OLOD and its alternate offline-online strategy (from Section \ref{subsec:alternate}) for the random checkerboard. The one- and two-dimensional comparisons are depicted in the Figures \ref{fig:com1} and \ref{fig:com2}, respectively. The $H_{uty}$ and $H_s$ denote the $H$-discretization corresponding to the sum constraint one and the alternate methods respectively. We only compare the results for the smaller mesh sizes $H=2^{-5}$ and $H=2^{-6}$ to focus on the consistency error and reduce the influence of any discretization errors. In both dimensions the alternate method produces improved results for larger $p$ as expected. Particularly in two dimensions, we can observe an RMSE less than $\sim 6\%$ for about $10\%$ of material defects. For the one-dimensional case, as expected both methods produce better results with an RMSE of $\sim 4\%$ at a $10\%$ material error. However, for much smaller probabilities of material error in the $1$D case, slightly better results can be noticed for the OLOD with sum constraint one. 
We emphasize that this is no contradiction of our theoretical derivations as we only considered limits $\varepsilon \to 0$ there. As expected from the choice of $s$, the difference between the sum constraint one model and the alternate variant becomes more prominent with growing $p$. We emphasize that, although our derivations were based on one-dimensional heuristics, the two-dimensional results are very promising and underline the potential of our approach. We note that in some practical scenarios in the context of metamaterials, defect probabilities are typically kept below  $5\%$ (and sometimes even below $1\%$). Therefore, the defect probabilities considered in our experiments are already on the higher end and cover a broad range of realistic cases.

\begin{figure}[h]
\centering
\includegraphics[width=0.5\linewidth, height=6cm]{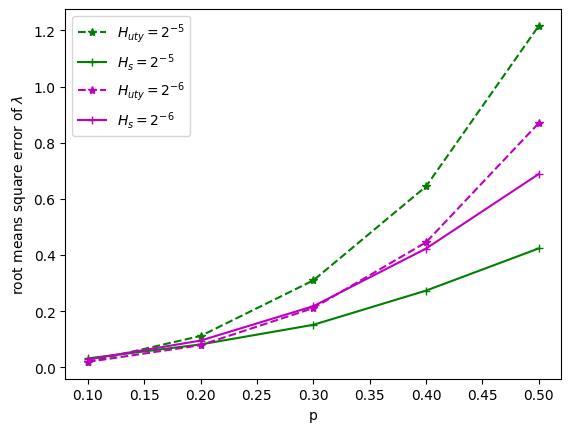}
\caption{Comparison of the offline-online strategy with sum constraint one and alternate version in one-dimension.}
\label{fig:com1}
\end{figure}

\begin{figure}[h]
\centering
\includegraphics[width=0.5\linewidth, height=6cm]{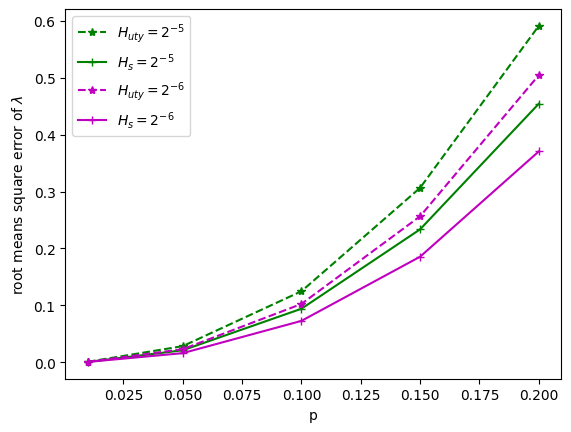}
\caption{Comparison of the offline-online strategy with sum constraint one and alternate version in two-dimension.}
\label{fig:com2}
\end{figure}

\section{Conclusion}
\label{sec:6}
We outlined a practically feasible approach of the Localized Orthogonal Decomposition (LOD) based offline-online strategy for the elliptic eigenvalue problem with randomly perturbed coefficients suited for Monte-Carlo-type simulations. We analyzed the errors in eigenvalue and eigenfunction approximation for any given realization of the coefficient. We further introduced a heuristic offline-online strategy that is uniquely adaptable to the defect distribution by exploiting the stochastic aspects of the problem. Our numerical experiments illustrate the good performance of our strategy, especially they indicate the superiority of the alternate method adapting the sum constraint. 
Since our derivation were based on rather simple one-dimensional heuristics, it might be interesting to investigate more sophisticated ``optimisation'' strategies as well as the application to other probability distributions in future research.
Moreover,  the periodicity assumption on the domain clearly plays a major role in this paper, which we hope to reduce in future research.

\section*{Acknowledgments}
This research is funded by the Deutsche Forschungsgemeinschaft (DFG, German Research Foundation) under project number 496556642. BV additionally acknowledges support by the Deutsche Forschungsgemeinschaft (DFG, German Research Foundation) under Germany's Excellence Strategy – EXC-2047/1 – 390685813.
The authors would like to thank the anonymous reviewers for their constructive feedback and insightful comments, which helped improve the clarity and quality of this work.
\bibliographystyle{alpha}
\bibliography{references}

\end{document}